\newtheorem{theorem}{Theorem}[section]
\newtheorem{lemma}[theorem]{Lemma}
\newtheorem{proposition}[theorem]{Proposition}
\theoremstyle{remark}
\theoremstyle{definition}
\newcommand{\R}{{\mathbb R}}
\newcommand{\C}{{\mathbb C}}
\newcommand{\N}{{\mathbb N}}
\newcommand{\E}{{\mathbb E}}
\providecommand{\norm}[1]{\left\lVert#1\right\rVert}
\newcommand{\dd}{\mathrm{d}}
\newcommand{\IS}{\mathbb{S}}
\newcommand{\IT}{\mathbb{T}}
\newcommand{\IP}{\mathbb{P}}
\newcommand{\cA}{\mathcal{A}}
\newcommand{\cF}{\mathcal{F}}
\newcommand{\cY}{\mathcal{Y}}
\newcommand{\ga}{\alpha}
\newcommand{\gb}{\beta}
\newcommand{\gd}{\delta}
\newcommand{\gk}{\kappa}
\newcommand{\gO}{\Omega}
\let\Re\relax
\DeclareMathOperator{\Re}{Re}
\let\Im\relax
\DeclareMathOperator{\Im}{Im}
\DeclareMathOperator{\trace}{Tr}
\definecolor{dkgreen}{rgb}{0,0.6,0}
\definecolor{cfblue}{RGB}{100,149,237}
\definecolor{bviolet}{RGB}{138,43,226}
\definecolor{mauve}{rgb}{0.58,0,0.82}
\title[EM approximations of the stochastic heat equation on the sphere]{Euler--Maruyama approximations of the stochastic heat equation on the sphere}
\author[A.~Lang and I.~Motschan-Armen]{Annika Lang and Ioanna Motschan-Armen}
\address{Department of Mathematical Sciences,\\ Chalmers University of Technology \& University of Gothenburg, \\ 41296~Gothenburg, Sweden}              
\email{\tt annika.lang@chalmers.se, \tt ioannamo@chalmers.se}
\thanks{Acknowledgment: This work was supported in part by the Swedish Research Council (VR) through grant no.\ 2020-04170, by the Wallenberg AI, Autonomous Systems and Software Program (WASP) funded by the Knut and Alice Wallenberg Foundation, by the Chalmers AI Research Centre (CHAIR), and by the European Union (ERC, StochMan, 101088589).}
\subjclass{60H35, 65C30, 60H15, 35R60, 33C55, 65M70}
\begin{document}

\begin{abstract}
The stochastic heat equation on the sphere driven by additive isotropic Wiener noise is approximated by a spectral method in space and forward and backward Euler--Maruyama schemes in time. The spectral approximation is based on a truncation of the series expansion with respect to the spherical harmonic functions. Optimal strong convergence rates for a given regularity of the initial condition and driving noise are derived for the Euler--Maruyama methods. Besides strong convergence, convergence of the expectation and second moment is shown, where the approximation of the second moment converges with twice the strong rate. Numerical simulations confirm the theoretical results.
\end{abstract}

\maketitle

\noindent{\bf Keywords.} Stochastic heat equation. Isotropic Wiener noise. Stochastic evolution on surfaces. Euler--Maruyama scheme. Spectral approximation.  Strong convergence. Second moment.

\section{Introduction}\label{sec-intro}

While stochastic partial differential equations (SPDEs) and their numerical approximations have mainly been considered in Euclidean space so far, applications motivate to extend the theory to surfaces and especially the sphere. Examples are uncertain evolution on the Earth or cells.
Numerical methods for SPDEs have been developed and analyzed for more than two decades by now, with references for example summarized in the monographs \cite{Kloeden2011,MR3308418}, but the literature on surfaces is still rare. We are only aware of the results on the sphere given in \cite{alodat2022approximation, cohen2021numerical, LeGia2019, Lang_2015, LeGiaPeach2019}.

To give this area a new push, we consider the stochastic heat equation
\begin{equation*}
\dd X(t) = \Delta_{\mathbb{S}^2} X(t) \, \dd t + \dd W(t)
\end{equation*}
on the unit sphere~$\IS^2$ with initial condition $X(0) = X_0 \in L^2(\Omega; L^2(\mathbb{S}^2))$ driven by an additive isotropic $Q$-Wiener process~$W$.

A spectral method including strong convergence for this equation has been considered in~\cite{Lang_2015} that allows only for simulation if the stochastic convolution is computed directly with the correct distribution. It does not allow to simulate solutions for a given sample path of the $Q$-Wiener process.

In this work we allow for computations based on samples of the $Q$-Wiener process by a time approximation with a forward and backward Euler--Maruyama scheme. Optimal rates for given regularity of the initial condition and noise are derived in the semigroup framework in~\cite{YubinY,Kru14} based on estimates for deterministic PDEs in~\cite{thomee97}. We are following the Gothenburg tradition of optimal estimates and derive optimal rates for strong convergence but allow for up to $\operatorname{O}(h)$ for a time step size~$h$ instead of the usually shown limit of~$\operatorname{O}(h^{1/2})$.

Additionally we show convergence of the expectation and the second moment of the solution for the spectral and the Euler--Maruyama methods. While the rates for the expectation are the same as for strong convergence due to the limits of the deterministic PDE theory, we obtain twice the rate for the second moment compared to the strong convergence for a given regularity.

In our setting we are able to show all results by elementary estimates on exponential functions and their approximation. Therefore we do not require the reader to be familiar with the semigroup theory used in~\cite{thomee97} but are able to illustrate numerical analysis for SPDEs and their optimal convergence in a more elementary way.

The outline of this paper is as follows: In Section~\ref{sec-setting} we introduce the stochastic heat equation with the necessary framework, background, and its properties. Section~\ref{sec-spec} recapitulates the spectral approximation in space presented in~\cite{Lang_2015} and its strong convergence. We show additionally convergence of the expectation and the second moment of the equation. The forward and backward Euler--Maruyama methods are then presented in Section~\ref{sec-Euler}. Based on properties of the exponential function and its approximation, we prove optimal strong convergence rates and convergence of the expectation and the second moment. We conclude in Section~\ref{sec-numerics} with numerical simulations that confirm our theoretical results. Solution paths for all approximation methods are shown at \url{https://www.youtube.com/playlist?list=PLtvKza5x5KGN6FR5JPOey85VpdJLEeY-w}. Details on the expectation and the second moment are included in Appendix~\ref{app:prop_sol} and the proofs on the estimates of the exponential functions are shown in Appendix~\ref{app:reg_exp_approx}.

\section{The stochastic heat equation on the sphere and its properties}\label{sec-setting}

We consider the stochastic heat equation on the sphere on a complete filtered probability space $(\gO, \cA, (\cF_t)_t, \IP)$ and a finite time interval $\mathbb{T} = [0,T]$, $T < + \infty$,
\begin{equation}
	\label{eq: SHEshort}
	\dd X(t) = \Delta_{\mathbb{S}^2} X(t) \, \dd t + \dd W(t)
\end{equation}
with $\cF_0$-measurable initial condition $X(0) = X_0 \in L^2(\Omega;L^2(\mathbb{S}^2))$. The equation is driven by an additive isotropic $Q$-Wiener process~$W$, i.e., $W$ is a $L^2(\IS^2)$-valued Wiener process with space covariance described by the operator~$Q$.
Before elaborating on the noise and deriving a solution $X(t) \in L^2(\gO;L^2(\IS^2))$ for the equation, let us introduce all necessary notation.

Let $\mathbb{S}^2$ denote the unit sphere in $\mathbb{R}^3$, i.e.,
\begin{equation*}
	\mathbb{S}^2 = \{ x \in \mathbb{R}^3, \norm x = 1 \},
\end{equation*}
where $\norm \cdot  $ denotes the Euclidean norm, and we equip it with the geodesic metric given by
\begin{equation*}
	d(x,y) = \arccos \, \langle x,y \rangle_{\mathbb{R}^3}
\end{equation*}
for all $x,y \in \mathbb{S}^2$. Furthermore we denote by $\sigma$ the Lebesgue measure on the sphere which admits the representation 
\begin{equation*}
	\dd \sigma (y) = \sin \vartheta \, \dd \vartheta \, \dd \varphi
\end{equation*}
for Cartesian coordinates~$y \in \mathbb{S}^2$ coupled to polar coordinates $(\vartheta, \varphi) \in [0, \pi] \times [0, 2 \pi)$ via the transformation $y = (\sin \vartheta \cos \varphi, \sin \vartheta \sin \varphi, \cos \vartheta)$.

To characterize the driving noise~$W$ and give properties of the Laplace--Beltrami operator~$\Delta_{\IS^2}$, it is essential to introduce the set of spherical harmonic functions $\cY := (Y_{\ell,m}, \, \ell \in \mathbb{N}_0, \, m = - \ell, \ldots , \ell)$ consisting of $Y_{\ell,m} : [0,\pi] \times [0,2 \pi) \rightarrow \mathbb{C}$ given by
\begin{equation*}
	Y_{\ell,m}(\vartheta,\varphi) = \sqrt{\frac{2 \ell +1}{4 \pi} \frac{(\ell - \mu)!}{(\ell + \mu)!}} \mathcal{P}_{\ell,m}(\cos \vartheta) e^{i m \varphi}
\end{equation*}
for $\ell \in \mathbb{N}_0$, $m =  0, \ldots , \ell$, and by
\begin{equation}
	\label{eq: Ylm+-}
	Y_{\ell, m} = (-1)^m \overline{Y_{\ell, -m}}
\end{equation}
for $m = - \ell , \ldots , - 1$.
Here the associated Legendre polynomials $(\mathcal{P}_{\ell,m}(\mu), \ell \in \mathbb{N}_0, m = 0, \ldots ,\ell )$ are defined by
\begin{equation*}
	\mathcal{P}_{\ell,m}(\mu) = (- 1)^m(1-\mu^2)^\frac{m}{2} \frac{\partial^m}{\partial \mu^m} \mathcal{P}_{\ell} (\mu)
\end{equation*}
for $\ell \in \mathbb{N}_0$, $m = 0,\ldots , \ell$, and $\mu \in [-1,1]$, which are themselves characterized by the Legendre polynomials $(P_{\ell}, \ell \in \mathbb{N}_0)$ that can for example be written by Rodrigues' formula (see, e.g., \cite{szego1975})
\begin{equation*}
	P_{\ell}(\mu) = 2^{- \ell} \frac{1}{\ell !} \frac{\partial^{\ell}}{\partial \mu^{\ell}} (\mu^2-1)^{\ell}
\end{equation*}
for all $\ell \in \mathbb{N}_0$ and $\mu \in [- 1, 1]$.

The spherical harmonic functions form an orthonormal basis of~$L^2(\IS^2;\C)$ and its subspace~$L^2(\IS^2)$ of all real-valued functions consists of all functions $f = \sum_{\ell =  0}^\infty \sum_{m=-\ell}^\ell f_{\ell,m} Y_{\ell,m}$ with coefficients $f_{\ell,m} \in \C$ satisfying
\begin{equation}\label{eq:real_functions}
	f_{\ell, m} = (-1)^m \overline{f_{\ell, -m}}
\end{equation}
similarly to the well-known properties of Fourier expansions of real-valued functions on~$\R$. With a slight abuse of notation we switch in what follows between Cartesian and polar coordinates and set
\begin{equation*}
	Y_{\ell,m}(y) = Y_{\ell,m}(\vartheta,\varphi)
\end{equation*} 
with $y = (\sin \vartheta \cos \varphi, \sin \vartheta \sin \varphi, \cos \vartheta)$.

We define the \emph{Laplace--Beltrami operator} or \emph{spherical Laplacian} in terms of spherical coordinates similarly to \cite[Section~3.4.3]{article2} by 
\begin{equation*}
	\Delta_{\mathbb{S}^2} = \left( \sin \vartheta \right)^{- 1} \frac{\partial}{\partial \vartheta} \left( \sin \vartheta \frac{\partial}{\partial \vartheta} \right) + \left( \sin \vartheta \right)^{- 2} \frac{\partial^2}{\partial \varphi^2}.
\end{equation*}
It is well known that it satisfies (see, e.g., Theorem~2.13 in \cite{morimoto1998analytic})
\begin{equation*}
	\Delta_{\mathbb{S}^2} Y_{\ell,m} = - \ell(\ell +1) Y_{\ell,m}
\end{equation*}
for all $\ell \in \mathbb{N}_0, \, m = - \ell, \ldots , \ell$, i.e., the spherical harmonic functions~$\mathcal{Y}$ are eigenfunctions of $\Delta_{\mathbb{S}^2}$ with eigenvalues $(- \ell ( \ell +1), \, \ell \in \mathbb{N}_0)$.

On the unit sphere we define the Sobolev spaces~$H^s(\IS^2)$ with smoothness index~$s \in \R$ via Bessel potentials as
\begin{equation*}
	H^s(\mathbb{S}^2) = (\mathrm{Id} - \Delta_{\mathbb{S}^2} )^{- s/2} L^2(\mathbb{S}^2),
\end{equation*}
with inner products given by
\begin{align*}
	\langle f,g \rangle_{H^s(\mathbb{S}^2)} = \langle (\mathrm{Id} - \Delta_{\mathbb{S}^2} )^{s/2} f,  (\mathrm{Id} - \Delta_{\mathbb{S}^2} )^{s/2} g \rangle_{L^2(\mathbb{S}^2)}.
\end{align*}
For further details on these spaces we refer for instance to~\cite{STRICHARTZ198348}.
The corresponding Lebesgue--Bochner spaces for $p \ge 1$ are denoted by $L^p(\Omega;H^s(\mathbb{S}^2))$ with norm
\begin{align*}
	\norm Z _{L^p(\Omega;H^s(\mathbb{S}^2))} = \E [ \|Z \|_{H^s(\mathbb{S}^2)}^p ]^{1/p}.
\end{align*}

The last thing to introduce from~\eqref{eq: SHEshort} before being able to solve it is the driving noise. Similarly to~\cite{Lang_2015} and~\cite{cohen2021numerical}, we introduce an isotropic $Q$-Wiener process by the series expansion, often referred to as \emph{Karhunen--Loève expansion},
\begin{align}
	\label{eqn: KarhLo}
	\begin{split}
		W(t,y) &= \sum_{\ell = 0}^{\infty} \sum_{m= - \ell}^{\ell}  a_{\ell,m}(t) Y_{\ell,m}(y) \\
		&= \sum_{\ell = 0}^{\infty} \Bigl( \sqrt{A_{\ell}} \beta_{\ell,0}^1(t) Y_{\ell,0}(y) + \sqrt{2 A_{\ell}} \sum_{m=1}^{\ell} (\beta_{\ell,m}^1(t) \mathrm{Re} Y_{\ell,m}(y) + \beta_{\ell,m}^2(t) \mathrm{Im} Y_{\ell,m}(y))\Bigr),
	\end{split}
\end{align}
where $((\beta_{\ell,m}^1,\beta_{\ell,m}^2), \, \ell \in \mathbb{N}_0, \, m = 0 , \ldots , \ell)$ is a sequence of independent, real-valued Brownian motions with $\beta_{\ell,0}^2 = 0$ for $\ell \in \mathbb{N}_0$ and $(A_\ell, \ell \in \N_0)$ denotes the \emph{angular power spectrum}.  In the equality we used the properties~\eqref{eq: Ylm+-} and~\eqref{eq:real_functions} to switch between a complex-valued and real-valued expansion. The covariance operator~$Q$ is characterized by its eigenexpansion (see, e.g., \cite{Lang2013,Lang_2015}) given by
\begin{align*}
	Q Y_{\ell,m}= A_\ell Y_{\ell,m}.
\end{align*}
The regularity of~$W$ is given by the properties of~$Q$, which in turn are described by the decay of the angular power spectrum. More specifically
\begin{align*}
	\|W(t)\|_{L^2(\Omega;H^s(\IS^2))}^2
		& = \|(\mathrm{Id} - \Delta_{\mathbb{S}^2} )^{s/2} W(t)\|_{L^2(\Omega;L^2(\IS^2))}^2
		= t \sum_{\ell = 0}^\infty (2\ell + 1) A_\ell (1+\ell(\ell+1))^s\\
		& = t \trace ((\mathrm{Id} - \Delta_{\mathbb{S}^2} )^s Q),
\end{align*}
which follows with similar calculations as in \cite[Proposition~5.2]{Lang_2015}. This expression is finite if $A_\ell \le C \ell^{-\ga}$ with $\alpha > 2(s+1)$ for all $\ell \ge \ell_0$.

We are now in state to solve the stochastic heat equation~\eqref{eq: SHEshort} which reads in integral form
\begin{align*}
	X(t) = X_0 +\int_0^t \Delta_{\mathbb{S}^2} X(s) \, \dd s + \int_0^t \, \dd W(s)
	= X_0 +\int_0^t \Delta_{\mathbb{S}^2} X(s) \, \dd s + W(t).
\end{align*}
Since the spherical harmonics are an eigenbasis of~$\Delta_{\IS^2}$ and~$Q$, we expand both sides in~$\cY$ and obtain
\begin{align}\label{eq:SHE_expansion}
	\begin{split}
	\sum_{\ell = 0}^{\infty} \sum_{m= -\ell}^{\ell} X_{\ell,m}(t) Y_{\ell,m}
	& \quad = \sum_{\ell = 0}^{\infty} \sum_{m= -\ell}^{\ell} X_{\ell,m}^0 Y_{\ell,m} + \int_0^t X_{\ell,m}(s) \Delta_{\mathbb{S}^2} Y_{\ell,m} \, \dd s +  a_{\ell,m}(t) Y_{\ell,m} \\
	& \quad = \sum_{\ell = 0}^{\infty} \sum_{m= -\ell}^{\ell} \left(X_{\ell,m}^0 - \ell(\ell+1) \int_0^t X_{\ell,m}(s) \, \dd s +  a_{\ell,m}(t) \right) Y_{\ell,m}	,	
	\end{split}
\end{align}
for the corresponding coefficients $X_{\ell,m}(t) = \langle X(t), Y_{\ell,m} \rangle_{L^2(\mathbb{S}^2;\C)}$ of the series expansion. The solution is then given by the solutions $(X_{\ell,m}, \ell \in \N_0, m=-\ell,\ldots,\ell)$ to the system of Ornstein--Uhlenbeck processes
\begin{equation}
	\label{eq: SODE}
	X_{\ell,m}(t) = X_{\ell,m}^0 - \ell ( \ell +1) \int_0^t X_{\ell,m}(s) \, \dd s +  a_{\ell,m}(t),
\end{equation}
which are obtained by the variations of constants formula
\begin{equation}
	\label{eq:SODE_solution}
	X_{\ell , m}(t) = e^{ - \ell (\ell +1)t} X_{\ell,m}^0 + \int_0^t e^{- \ell (\ell +1)(t-s)} \dd a_{\ell,m}(s).
\end{equation}

In order to simulate real-valued solutions in later sections using the expansion~\eqref{eqn: KarhLo}, we need to reformulate the equations in the real and imaginary part. Using~\eqref{eq:real_functions} and noting that $X_{\ell,0}$ and $Y_{\ell,0}$ are real-valued for all $\ell \in \N_0$, we obtain
\begin{equation}
	\label{eqn: SDE}
	\sum_{\ell = 0}^{\infty} \sum_{m= -\ell}^{\ell} X_{\ell,m}(t) Y_{\ell,m}
	 = \sum_{\ell =  0}^\infty \bigl( X_{\ell,0}(t) Y_{\ell,0} + \sum_{m=1}^{\ell} 2 \Re(X_{\ell,m}(t)) \Re (Y_{\ell,m}) - 2 \Im (X_{\ell,m}(t))  \Im (Y_{\ell,m}) \bigr).
\end{equation}
This yields for our system of stochastic differential equations~\eqref{eq: SODE} using~\eqref{eqn: KarhLo}
\begin{align}
	\label{eqn: SDEParts}
	\begin{cases}		
		X_{\ell,0}(t) = X_{\ell,0}^0 - \ell (\ell +1) \int_0^{t} X_{\ell,0}(s) \, \dd s + \sqrt{A_{\ell}} \beta_{\ell,0}^1(t), \\
		\Re(X_{\ell,m}(t)) = \Re(X_{\ell,m}^0) - \ell (\ell +1) \int_0^t \Re(X_{\ell,m}(s)) \, \dd s  + \sqrt{2^{-1}A_{\ell}} \, \beta_{\ell,m}^1(t), \\
		\Im(X_{\ell,m}(t)) = \Im(X_{\ell,m}^0) - \ell (\ell +1) \int_0^t \Im(X_{\ell,m}(s)) \, \dd s + \sqrt{2^{-1} A_{\ell}} \, \beta_{\ell,m}^2(t).
	\end{cases}
\end{align}

By straightforward computations, which we add for completeness in Appendix~\ref{app:prop_sol}, we obtain that the expectation of the solution is given by
\begin{equation}\label{eq:exact_expectation}
	\E[X(t)]
	= \sum_{\ell =  0}^{\infty} \sum_{m = - \ell}^{\ell} e^{- \ell (\ell +1)t} \E[X_{\ell,m}^0] Y_{\ell,m},
\end{equation}
and the second moment satisfies
\begin{equation*}%\label{eq:second_moment}
		\E[\|X(t)\|^2_{L^2(\IS^2)}] 
		= \sum_{\ell =  0}^{\infty} \Bigl( \sum_{m = -\ell}^{\ell} e^{-2 \ell (\ell +1)t} \E[|X_{\ell,m}^0|^2] \| Y_{\ell,m}\|^2_{L^2(\mathbb{S}^2;\C)} \Bigr)
		+  A_\ell \frac{1+ 2\ell}{2\ell(\ell+1)} (1 - e^{-2 \ell (\ell+1)t}).
\end{equation*}

\section{Spectral approximation in space}\label{sec-spec}

We start with the approximation in space by the spectral method used in~\cite{Lang_2015}. We recall the strong convergence and derive the error in the expectation and second moment.

We approximate the solution by truncating the series expansion~\eqref{eq:SHE_expansion} with the given solutions~\eqref{eq:SODE_solution} at a given $\gk > 0$, i.e., we set
\begin{equation}\label{eq:spectral_approx}
	X^{(\kappa)}(t) 
		= \sum_{\ell = 0}^{\kappa} \sum_{m = -\ell}^\ell 
			\Bigl( e^{-\ell(\ell+1)t} X_{\ell,m}^0 + \int_0^t e^{-\ell(\ell+1)(t-s)} \, \dd a_{\ell,m}(s) \Bigr) Y_{\ell,m}.
\end{equation}

Analogously to the calculations in Appendix~\ref{app:prop_sol} we derive the expectation
\begin{equation}\label{eq:exact_spectral}
 \E[X^{(\kappa)}(t)]
 	= \sum_{\ell =  0}^{\kappa} \sum_{m = - \ell}^{\ell} e^{- \ell (\ell +1)t} \E[X_{\ell,m}^0] Y_{\ell,m},
\end{equation}
 and the second moment of the spectral approximation
 \begin{align}
 \label{eq:second_moment_spectral}
 	\begin{split}
  		& \E[\|X^{(\kappa)}(t)\|^2_{L^2(\IS^2)}] \\
 			& \quad = \sum_{\ell =  0}^{\gk} \Bigl( \sum_{m = -\ell}^{\ell} \bigl( e^{-2 \ell (\ell +1)t} \E[|X_{\ell,m}^0|^2] \| Y_{\ell,m}\|^2_{L^2(\mathbb{S}^2;\C)} \bigr)
 				+  A_\ell \frac{1+ 2\ell}{2\ell(\ell+1)} (1 - e^{-2 \ell (\ell+1)t})\Bigr).	
 	\end{split}
 \end{align}

Strong convergence of the spectral approximation was already shown in Lemma~7.1 in~\cite{Lang_2015}. We state the result here with respect to the initial condition which is of interest in the next section. The constants follow immediately from the proof in~\cite{Lang_2015}.

\begin{lemma}\label{lem:Lp_conv_stoch_heat}
	Let $t \in \IT$. 
	Furthermore assume that there exist $\ell_0 \in \N$, $\ga > 0$, and a constant~$C>0$ such that the angular power spectrum $(A_\ell, \ell \in \N_0)$ satisfies $A_\ell \le C \cdot \ell^{-\ga}$ for all $\ell > \ell_0$. 
	Then the strong error of the approximate solution $X^{(\kappa)}$ is bounded uniformly in time and independently of a time discretization by
	\begin{equation*}
	\|X(t) - X^{(\kappa)}(t)\|_{L^2(\gO;L^2(\IS^2))}
	\le e^{-(\gk+1)(\gk+2)t} \|X_0\|_{L^2(\gO;L^2(\IS^2))} + \hat{C} \cdot \gk^{-\ga/2}
	\end{equation*}
	for all $\gk \ge \ell_0$ and a constant~$\hat{C}$ depending on $C$ and~$\ga$. 
\end{lemma}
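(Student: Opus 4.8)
The plan is to write the error $X(t) - X^{(\kappa)}(t)$ as the tail of the series expansion over $\ell > \kappa$, split it into the part coming from the initial condition and the part coming from the stochastic convolution, and bound each separately using only elementary estimates on $e^{-\ell(\ell+1)t}$. From~\eqref{eq:SHE_expansion}, \eqref{eq:SODE_solution}, and~\eqref{eq:spectral_approx} we have
\begin{equation*}
	X(t) - X^{(\kappa)}(t)
		= \sum_{\ell = \kappa+1}^{\infty} \sum_{m = -\ell}^\ell
			\Bigl( e^{-\ell(\ell+1)t} X_{\ell,m}^0 + \int_0^t e^{-\ell(\ell+1)(t-s)} \, \dd a_{\ell,m}(s) \Bigr) Y_{\ell,m}.
\end{equation*}
By orthonormality of $\cY$ in $L^2(\IS^2;\C)$ and the fact that, since $X(t)-X^{(\kappa)}(t)$ is real-valued, $\|\cdot\|_{L^2(\IS^2)}$ and $\|\cdot\|_{L^2(\IS^2;\C)}$ agree, Parseval's identity gives
\begin{equation*}
	\|X(t) - X^{(\kappa)}(t)\|_{L^2(\gO;L^2(\IS^2))}^2
		= \sum_{\ell = \kappa+1}^{\infty} \sum_{m = -\ell}^\ell
			\E\Bigl[ \Bigl| e^{-\ell(\ell+1)t} X_{\ell,m}^0 + \int_0^t e^{-\ell(\ell+1)(t-s)} \, \dd a_{\ell,m}(s) \Bigr|^2 \Bigr].
\end{equation*}

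Next I would use the triangle inequality in $L^2(\gO;L^2(\IS^2))$ to split this into the initial-condition term and the stochastic-convolution term. For the first, since $e^{-\ell(\ell+1)t}$ is decreasing in $\ell$, on the range $\ell \ge \kappa+1$ it is bounded by $e^{-(\kappa+1)(\kappa+2)t}$, and pulling this constant out of the sum leaves $\sum_{\ell=\kappa+1}^\infty \sum_{m=-\ell}^\ell \E[|X_{\ell,m}^0|^2]^{1/2}$-type quantities which are dominated by $\|X_0\|_{L^2(\gO;L^2(\IS^2))}$; this yields the term $e^{-(\kappa+1)(\kappa+2)t}\|X_0\|_{L^2(\gO;L^2(\IS^2))}$. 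For the stochastic-convolution term, the Itô isometry applied to~\eqref{eq:SODE_solution} (or equivalently the closed-form second moment of the Ornstein--Uhlenbeck integral, cf.\ the computation of~\eqref{eq:second_moment_spectral}) gives
\begin{equation*}
	\E\Bigl[ \Bigl| \int_0^t e^{-\ell(\ell+1)(t-s)} \, \dd a_{\ell,m}(s) \Bigr|^2 \Bigr]
		\le A_\ell \, \frac{1 - e^{-2\ell(\ell+1)t}}{2\ell(\ell+1)} \cdot (\text{a factor of }1\text{ or }2),
\end{equation*}
so after summing over $m$ the $\ell$-th block contributes at most of order $A_\ell (2\ell+1)/(2\ell(\ell+1)) \lesssim A_\ell$, uniformly in $t$. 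Using the hypothesis $A_\ell \le C\ell^{-\ga}$ for $\ell > \ell_0$ and $\kappa \ge \ell_0$, the tail sum is $\sum_{\ell=\kappa+1}^\infty A_\ell \lesssim C \sum_{\ell > \kappa} \ell^{-\ga}$. Here I would integrate-test: if $\ga > 1$ this is $\lesssim C\,\kappa^{1-\ga}$, but to get the stated $\kappa^{-\ga/2}$ one instead crudely bounds $\ell^{-\ga} \le \kappa^{-\ga}$ on each term against... no — rather, the honest route is that the quoted rate $\kappa^{-\ga/2}$ comes from the square root of the squared-norm bound $\sum_{\ell > \kappa} A_\ell \lesssim \kappa^{-\ga}$ (valid when the tail behaves like $\kappa^{1-\ga}$ only if one also absorbs one power; more robustly, one bounds $\E[|\int_0^t \cdots|^2] \le A_\ell \min\{t, (2\ell(\ell+1))^{-1}\}$ and compares against $\ell^{-\ga-2}$), giving $\hat C \kappa^{-\ga/2}$ after taking the square root, with $\hat C$ depending only on $C$ and $\ga$.

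The main obstacle is bookkeeping the exact power of $\kappa$ in the tail estimate: one must be careful that the stochastic-convolution block decays fast enough (combining the explicit $(\ell(\ell+1))^{-1}$ gain from the time integral with the $\ell^{-\ga}$ decay of $A_\ell$) to produce exactly $\kappa^{-\ga}$ in the squared norm, hence $\kappa^{-\ga/2}$ after the square root, rather than a worse exponent; this is exactly the point where one invokes that the proof in~\cite{Lang_2015} already carries out this estimate, so I would either cite Lemma~7.1 there directly or reproduce the short integral comparison $\sum_{\ell>\kappa}\ell^{-\ga-2}\cdot\ell^{2} \lesssim \kappa^{-\ga}$... — more precisely $\sum_{\ell > \kappa} A_\ell/(2\ell(\ell+1)) \cdot (2\ell+1) \lesssim \sum_{\ell>\kappa} \ell^{-\ga} \lesssim \kappa^{1-\ga}$ which already suffices since $\kappa^{1-\ga} \le \kappa^{-\ga}$ fails for $\kappa\ge1$ — so the cleanest presentation is to bound the squared strong error by $C'(e^{-2(\kappa+1)(\kappa+2)t}\|X_0\|^2 + \kappa^{-\ga})$ using $\sum_{\ell>\kappa}\ell^{-\ga-1}\lesssim\kappa^{-\ga}$ after noting $A_\ell(2\ell+1)/(2\ell(\ell+1)) \le C\ell^{-\ga-1}$ for $\ell$ large, and then take square roots and apply $\sqrt{a+b}\le\sqrt a+\sqrt b$. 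Everything else is routine.
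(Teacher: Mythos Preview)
Your final route is correct and is exactly the argument the paper has in mind: the paper does not give its own proof of this lemma but simply states that it is Lemma~7.1 in~\cite{Lang_2015}, with the constants read off from that proof; the computation in the proof of Lemma~\ref{lem:weak_conv_stoch_heat} confirms that the initial-condition term is handled precisely as you describe. The meandering in your middle paragraph should be cut --- the correct bookkeeping is the one you settle on at the end: from the It\^o isometry the stochastic-convolution block contributes $A_\ell(2\ell+1)/(2\ell(\ell+1)) \le C\ell^{-\ga-1}$ to the \emph{squared} norm, the tail $\sum_{\ell>\gk}\ell^{-\ga-1}\lesssim \gk^{-\ga}$ by integral comparison (valid for all $\ga>0$), and $\sqrt{a+b}\le\sqrt a+\sqrt b$ gives the stated $\hat C\,\gk^{-\ga/2}$.
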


We continue with the convergence of the expectation and the second moment of the equation. Since the solution is Gaussian conditioned on the initial condition, these are important quantities to characterize the solution.

\begin{lemma}\label{lem:weak_conv_stoch_heat}
	Let $t \in \IT$. 
	Furthermore assume that there exist $\ell_0 \in \N$, $\ga > 0$, and a constant~$C>0$ such that the angular power spectrum $(A_\ell, \ell \in \N_0)$ satisfies $A_\ell \le C \cdot \ell^{-\ga}$ for all $\ell > \ell_0$. 
	Then the expectation of the approximate solution $X^{(\kappa)}$ is bounded for all $\gk \ge \ell_0$ uniformly in time and independently of a time discretization by
	\begin{equation*}
	\|\E[X(t)] - \E[X^{(\kappa)}(t)]\|_{L^2(\IS^2)}
	\le e^{-(\gk+1)(\gk+2)t} \|\E[X_0]\|_{L^2(\IS^2)}.
	\end{equation*}
	The error of the second moment is bounded by
	\begin{equation*}
		| \E[ \| X(t) \|^2_{L^2(\IS^2)} - \| X^{(\kappa)}(t) \|^2_{L^2(\IS^2)}] | 
	\leq 2 \cdot e^{- 2(\gk+1)(\gk+2)t} \, \|X_0\|_{L^2(\gO;L^2(\IS^2))}^2 
		+ \hat{C} \cdot \kappa^{- \alpha}
	\end{equation*}
	for all $\gk \ge \ell_0$, where $\hat{C}$ depends on $C$ and~$\ga$.
\end{lemma}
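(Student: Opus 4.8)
The plan is to compute both errors explicitly using the closed-form expressions for the expectation and second moment of the exact and the spectral solutions, and then bound the resulting tails with elementary estimates on the exponential factors. Both quantities decompose cleanly since the spectral approximation simply truncates the sum over~$\ell$ at~$\gk$.

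For the expectation, I would subtract~\eqref{eq:exact_expectation} from~\eqref{eq:exact_spectral}, which leaves exactly the tail $\E[X(t)]-\E[X^{(\kappa)}(t)] = \sum_{\ell = \gk+1}^\infty \sum_{m=-\ell}^\ell e^{-\ell(\ell+1)t}\E[X_{\ell,m}^0] Y_{\ell,m}$. Since the $Y_{\ell,m}$ are orthonormal in $L^2(\IS^2;\C)$, Parseval gives $\|\E[X(t)] - \E[X^{(\kappa)}(t)]\|_{L^2(\IS^2)}^2 = \sum_{\ell=\gk+1}^\infty \sum_{m=-\ell}^\ell e^{-2\ell(\ell+1)t} |\E[X_{\ell,m}^0]|^2$. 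For every $\ell \ge \gk+1$ we have $\ell(\ell+1) \ge (\gk+1)(\gk+2)$, so $e^{-2\ell(\ell+1)t} \le e^{-2(\gk+1)(\gk+2)t}$; pulling this factor out and bounding the remaining sum by $\sum_{\ell,m} |\E[X_{\ell,m}^0]|^2 = \|\E[X_0]\|_{L^2(\IS^2)}^2$ (Parseval again, together with $|\E[X_{\ell,m}^0]| \le \E[|X_{\ell,m}^0|]$ or directly since $\E[X_0]\in L^2$) yields the claimed bound after taking square roots.

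For the second moment, I would subtract~\eqref{eq:second_moment_spectral} from the exact formula. The difference is again a tail:
\[
\E[\|X(t)\|^2_{L^2(\IS^2)}] - \E[\|X^{(\kappa)}(t)\|^2_{L^2(\IS^2)}] = \sum_{\ell=\gk+1}^\infty \Bigl( \sum_{m=-\ell}^\ell e^{-2\ell(\ell+1)t}\E[|X_{\ell,m}^0|^2] \|Y_{\ell,m}\|^2_{L^2(\IS^2;\C)} + A_\ell \frac{1+2\ell}{2\ell(\ell+1)}(1-e^{-2\ell(\ell+1)t})\Bigr),
\]
which is nonnegative, so the absolute value is harmless. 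The first sum is handled exactly as before: bound $e^{-2\ell(\ell+1)t} \le e^{-2(\gk+1)(\gk+2)t}$ and use $\sum_{\ell,m}\E[|X_{\ell,m}^0|^2]\|Y_{\ell,m}\|^2 = \|X_0\|^2_{L^2(\gO;L^2(\IS^2))}$; this explains the factor~$2$ only if I am slightly generous, but in fact the factor~$2$ in the statement presumably absorbs the crude bound $1-e^{-2\ell(\ell+1)t} \le 1$ together with the first term — I would split the constant accordingly. For the second sum, I use $1-e^{-2\ell(\ell+1)t} \le 1$ and $\frac{1+2\ell}{2\ell(\ell+1)} \le \frac{C'}{\ell}$ for a universal constant, so that the sum is bounded by $C' \sum_{\ell=\gk+1}^\infty A_\ell \ell^{-1} \le C' C \sum_{\ell=\gk+1}^\infty \ell^{-\ga-1}$, and comparing with an integral gives $\le \hat C \gk^{-\ga}$.

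The main obstacle, such as it is, is purely bookkeeping: one must verify that the closed-form second-moment expressions used here are exactly the ones derived earlier (so that the subtraction telescopes to the $\ell>\gk$ tail with no cross terms), and one must be careful with the constant in front of the $e^{-2(\gk+1)(\gk+2)t}$ term — the factor~$2$ should be justified by noting $\|Y_{\ell,m}\|^2_{L^2(\IS^2;\C)} = 1$ (or absorbing any normalization) and that the remaining tail of initial-data coefficients is at most $\|X_0\|^2_{L^2(\gO;L^2(\IS^2))}$; the elementary estimate $\sum_{\ell > \gk} \ell^{-\ga-1} \le \int_\gk^\infty x^{-\ga-1}\,\dd x = \ga^{-1}\gk^{-\ga}$ for the noise contribution, valid for $\gk \ge \ell_0$, then fixes $\hat C$ in terms of $C$ and~$\ga$. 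No regularity of the noise beyond the stated polynomial decay is needed, and the bound is uniform in $t \in \IT$ since all exponential factors are at most~$1$.
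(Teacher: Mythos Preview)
Your argument for the expectation is essentially identical to the paper's: both write the error as the tail $\sum_{\ell>\gk}$, apply orthonormality of~$\cY$, pull out $e^{-2(\gk+1)(\gk+2)t}$, and bound the remaining coefficients by $\|\E[X_0]\|_{L^2(\IS^2)}^2$.

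For the second moment you take a genuinely different route. The paper does not subtract the two closed-form second-moment formulas; instead it observes (via orthogonality of the $Y_{\ell,m}$, i.e.\ Pythagoras in $L^2(\IS^2)$) that
\[
\bigl|\E[\|X(t)\|^2_{L^2(\IS^2)} - \|X^{(\kappa)}(t)\|^2_{L^2(\IS^2)}]\bigr|
= \|X(t)-X^{(\kappa)}(t)\|_{L^2(\gO;L^2(\IS^2))}^2,
\]
and then simply squares the strong-error bound from Lemma~\ref{lem:Lp_conv_stoch_heat}. The factor~$2$ that puzzled you is exactly the $(a+b)^2\le 2a^2+2b^2$ loss incurred when squaring that two-term bound. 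Your direct tail computation is also correct and in fact yields the sharper constant~$1$ in front of the exponential term, so there is no need to ``be slightly generous'' or hunt for where the~$2$ comes from --- your estimate already implies the stated inequality. The trade-off: the paper's argument is shorter and reuses the strong-convergence lemma, while yours is self-contained and avoids invoking Lemma~\ref{lem:Lp_conv_stoch_heat} at the price of redoing the $\sum_{\ell>\gk}A_\ell(2\ell+1)/(2\ell(\ell+1))\le \hat C\gk^{-\ga}$ estimate by hand.
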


\begin{proof}
	Given the exact formulation of the expectation of the solution~\eqref{eq:exact_expectation}, the error is given by
	\begin{equation*}
	 \|\E[X(t)] - \E[X^{(\kappa)}(t)]\|_{L^2(\IS^2)}
	 	= \Bigl\| \sum_{\ell =  \kappa + 1}^{\infty} \sum_{m = - \ell}^{\ell} e^{- \ell (\ell +1)t} \, \E[X_{\ell,m}^0] Y_{\ell,m} \Bigr\|_{L^2(\IS^2)},
	\end{equation*}
	which is bounded in the same way as in Lemma~\ref{lem:Lp_conv_stoch_heat} (see~\cite{Lang_2015}) by
	\begin{align*}
	 \Bigl\| \sum_{\ell =  \kappa + 1}^{\infty} \sum_{m = - \ell}^{\ell} e^{- \ell (\ell +1)t} \, \E[X_{\ell,m}^0] Y_{\ell,m} \Bigr\|_{L^2(\IS^2)}^2
	 	& = \sum_{\ell=\gk+1}^\infty \sum_{m= - \ell}^\ell e^{- 2\ell(\ell+1)t} \, \|\E[X_{\ell,m}^0] Y_{\ell, m}\|_{L^2(\IS^2;\C)}^2\\
	 	& \le e^{- 2(\gk+1)(\gk+2)t} \|\E[X_0]\|_{L^2(\IS^2)}^2.
	\end{align*}
	This finishes the proof of the first part of the lemma.
	
	Using the same computation as in the proof of \cite[Proposition~4]{cohen2021numerical}, one obtains for the second moment
	\begin{equation*}
	 | \E[ \| X(t) \|^2_{L^2(\IS^2)} - \| X^{(\kappa)}(t) \|^2_{L^2(\IS^2)}] |
	 	= \|X(t) - X^{(\kappa)}(t)\|_{L^2(\gO;L^2(\IS^2))}^2,
	\end{equation*}
	and applying Lemma~\ref{lem:Lp_conv_stoch_heat} yields the claim.
\end{proof}

 Having convergence results for the semidiscrete approximation at hand, we are now ready to look at time discretizations and fully discrete approximations in the next section.

\section{Euler--Maruyama approximation in time}\label{sec-Euler}

We have seen in the previous section that we can approximate the solution to~\eqref{eq: SHEshort} by the spectral approximation~\eqref{eq:spectral_approx}. Computations are only possible in practice if simulating the stochastic convolutions directly. Since we know the distribution of the stochastic convolutions, this can be done (see \cite{Lang_2015} for details). If we want to simulate the solution for a given sample of the $Q$-Wiener process~$W$, we need to take another approach. In this section we introduce forward and backward Euler--Maruyama schemes based on samples of~$W$ and show their convergence.

Let $0 = t_0 < t_1 < \ldots < t_n = T, \, \, n \in \mathbb{N}$, be an equidistant time grid with step size~$h$. The \emph{forward Euler} approximation of the exponential function $e^{-\ell(\ell+1)h}$ is given by
\begin{equation*}
	\xi = (1 - \ell (\ell +1)h).
\end{equation*}%\label{eq:exp_forward}
In the later convergence analysis, we will need properties of this approximation that separate the behavior of growing~$\ell$ and $h$ going to zero. These estimates have been shown in the abstract semigroup framework, e.g., in~\cite{Kru14} and based on~\cite{thomee97}. We are able to show these optimal regularity results based on elementary computations. Surprisingly, we did not find them in the literature for finite-dimensional SODE systems, where the growth in~$\ell$ is hidden in global constants. The proof of the following proposition is given in Appendix~\ref{app:reg_exp_approx}.
\begin{proposition}\label{prop:exp_fEM}
	The exponential function and its approximation by the forward Euler approximation satisfy the following properties:
	\begin{enumerate}[label=\alph*)]
		\item \label{prop:exp_fEM_1} For all  $\mu \in (0,1]$, there exists a constant $C_\mu >0$ such that for all $\ell \in \N$ and $h > 0$
		\begin{equation*}
			| e^{-\ell(\ell+1)h} - (1-\ell(\ell+1)h) |
			\le C_\mu (\ell(\ell+1))^{1+ \mu} h^{1+ \mu}.
		\end{equation*}
		\item \label{prop:exp_fEM_3} For all  $\mu \in (0,1]$, there exists a constant $C_\mu >0$ such that for all $\ell, k \in \N$ and $h > 0$ with $\ell(\ell+1)h \le 1$
		\begin{align*}
			| e^{- \ell ( \ell +1) h\cdot k} - (1-\ell (\ell +1)h)^k |
			& \le C_\mu (\ell(\ell+1))^{1+ \mu} h^{1+ \mu} \, k \, e^{-\ell(\ell+1)h\cdot (k-1)}\\
			&\le C_\mu (\ell(\ell+1))^{\mu} \, h^{\mu}.
		\end{align*}
	\end{enumerate}
\end{proposition}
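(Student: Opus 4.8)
The plan is to write $x = \ell(\ell+1)h$ and reduce everything to two elementary one-variable estimates on the function $g(x) = e^{-x} - (1-x)$ for $x > 0$. For part~\ref{prop:exp_fEM_1}, I would first observe that a Taylor expansion with integral remainder gives $g(x) = \int_0^x (x-s)\e^{-s}\,\dd s$, so $0 \le g(x) \le x^2/2$ for all $x\ge 0$, and trivially $g(x) \le e^{-x}+x \le$ something linear for large $x$; hence $g(x) \le C_\mu\, x^{1+\mu}$ for every fixed $\mu \in (0,1]$, since $x^{1+\mu}$ dominates $x^2$ near $0$ (up to a constant, as $1+\mu \le 2$) and is dominated by… wait — near $0$ we need $x^2 \le C x^{1+\mu}$, i.e.\ $x^{1-\mu} \le C$, which holds on bounded sets; and for $x$ large we need $g(x)\lesssim x^{1+\mu}$, which is clear since $g(x)\le 1+x$. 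So one splits at $x=1$: on $(0,1]$ use $g(x)\le x^2/2 \le x^{1+\mu}/2$ (because $1+\mu\le 2$ and $x\le 1$), and on $(1,\infty)$ use $g(x)\le 1+x\le 2x \le 2x^{1+\mu}$. Taking $C_\mu = 2$ works; substituting back $x=\ell(\ell+1)h$ gives exactly the claimed bound $|g(\ell(\ell+1)h)| \le C_\mu (\ell(\ell+1))^{1+\mu} h^{1+\mu}$.

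For part~\ref{prop:exp_fEM_3} I would use the standard telescoping identity for the difference of $k$-th powers: with $a = e^{-x}$ and $b = 1-x$,
\[
a^k - b^k = (a-b)\sum_{j=0}^{k-1} a^{j} b^{k-1-j}.
\]
Under the hypothesis $x = \ell(\ell+1)h \le 1$ we have $b = 1-x \in [0,1)$, so $0 \le b^{k-1-j} \le 1$, and $a^j = e^{-xj} \le e^{-x(j)} \le 1$; more usefully, to extract the factor $e^{-x(k-1)}$ I would bound each summand by $a^j b^{k-1-j} \le \max(a,b)^{k-1} \le a^{k-1} = e^{-\ell(\ell+1)h(k-1)}$, using that $b = 1-x \le e^{-x} = a$ for $x\ge 0$ (the elementary inequality $1-x\le e^{-x}$). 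Hence the whole sum is at most $k\, e^{-\ell(\ell+1)h(k-1)}$, and combined with part~\ref{prop:exp_fEM_1} for $|a-b|$ this yields the first inequality
\[
|a^k-b^k| \le C_\mu (\ell(\ell+1))^{1+\mu} h^{1+\mu}\, k\, e^{-\ell(\ell+1)h(k-1)}.
\]
The second inequality then follows from the scalar fact that $x\cdot k\, e^{-x(k-1)}$ is bounded: writing $y = x(k-1)\ge 0$ we have $x k e^{-x(k-1)} = \tfrac{k}{k-1} y e^{-y} \le 2\sup_{y\ge0} y e^{-y} = 2/e$ for $k\ge 2$ (and the case $k=1$ is trivial since then the left side of the first inequality is $x\cdot 1\cdot 1 = x \le 1$, bounded by part (a) directly); so
\[
C_\mu (\ell(\ell+1))^{1+\mu}h^{1+\mu}\,k\,e^{-\ell(\ell+1)h(k-1)} = C_\mu (\ell(\ell+1))^{\mu}h^{\mu}\cdot \bigl(\ell(\ell+1)h\cdot k\,e^{-\ell(\ell+1)h(k-1)}\bigr) \le \tilde C_\mu (\ell(\ell+1))^{\mu} h^{\mu},
\]
and absorbing the numerical constant into $C_\mu$ gives the stated form.

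I do not expect a serious obstacle here; the only mildly delicate point is bookkeeping the constants so that the \emph{same} $C_\mu$ (up to renaming) serves both displayed inequalities in part~\ref{prop:exp_fEM_3}, and making sure the sup $\sup_{y\ge 0} y\e^{-y} = 1/e$ and the factor $k/(k-1)$ are handled cleanly for small $k$. An alternative to the telescoping argument, if one prefers, is to note $|a^k - b^k| \le k\max(a,b)^{k-1}|a-b|$ directly from the mean value theorem applied to $t\mapsto t^k$ on $[b,a]$ (valid since $0\le b\le a\le 1$), which is perhaps the cleanest route and avoids summation entirely.
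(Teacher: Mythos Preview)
Your proof is correct. Part~\ref{prop:exp_fEM_3} is essentially identical to the paper's argument: the same telescoping identity $a^k-b^k=(a-b)\sum a^{j}b^{k-1-j}$, the same use of $1-x\le e^{-x}$ to bound each summand by $e^{-x(k-1)}$, and the same reduction of the second inequality to the boundedness of $y\mapsto y\,e^{-y}$ (the paper writes it as $k\,e^{-x(k-1)}\le e^{x}\cdot\tilde C_1 (x)^{-1}\le e\,\tilde C_1\,x^{-1}$, which is your computation with a slightly different splitting of the exponent).

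For part~\ref{prop:exp_fEM_1} the routes differ. The paper writes the error as the double integral $\int_0^h\int_0^s(\ell(\ell+1))^2 e^{-\ell(\ell+1)r}\,\dd r\,\dd s$ and then applies the scalar bound $x^{\eta}e^{-x}\le \tilde C_\eta$ \emph{inside} the integral, with $\eta=1-\mu$, to replace $(\ell(\ell+1))^{1-\mu}e^{-\ell(\ell+1)r}$ by $\tilde C_{1-\mu}\,r^{\mu-1}$; integrating the resulting power yields the claimed $h^{1+\mu}$ directly, with no case distinction. Your argument instead bounds $g(x)=e^{-x}-(1-x)$ by $x^2/2$ and splits at $x=1$ to interpolate between $x^2$ and $x$. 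Your version is more elementary and even gives an explicit constant, while the paper's integral argument avoids the case split and is the same mechanism reused verbatim for the backward Euler estimate in Proposition~\ref{prop:exp_bEM}, where the analogous double integral appears with a different outer limit. Either approach is fine here.
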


Following \cite[Definition~10]{L10}, stability is guaranteed if there exists $K \geq 1$ such that for all $h > 0$ and all $\ell \in \mathbb{N}_0$
\begin{equation*}
	| 1 - \ell ( \ell +1) h | \leq K.
\end{equation*}
Therefore this forward approximation will only lead to a stable scheme if
\begin{equation*}
	h \leq | \ell ( \ell +1) |^{-1},
\end{equation*}
which restricts the time step size~$h$ by the truncation index~$\gk$.

The \emph{backward Euler} approximation of the exponential function $e^{-\ell(\ell+1)h}$ is given by
\begin{equation*}
	\xi = (1 + \ell (\ell +1)h)^{-1},
\end{equation*}%\label{eq:exp_backward}
which is unconditionally stable since
\begin{equation*}
	\left| (1 + \ell (\ell +1)h)^{-1} \right| \leq K
\end{equation*}
for any $K \geq 1$.

We prove analogous results to Proposition~\ref{prop:exp_fEM} also for the backward scheme in Appendix~\ref{app:reg_exp_approx}, which are stated in the following proposition.
\begin{proposition}\label{prop:exp_bEM}
	The exponential function and its approximation by the backward Euler approximation satisfy the following properties:
	\begin{enumerate}[label=\alph*)]
		\item \label{prop:exp_bEM_1} For all  $\mu \in (-1,1]$, there exists a constant $C_\mu >0$ such that for all $\ell \in \N$ and $h > 0$
		\begin{equation*}
			| e^{-\ell(\ell+1)h} - (1+\ell(\ell+1)h)^{-1} |
			\le C_\mu (\ell(\ell+1))^{1+\mu} h^{1+\mu}.
		\end{equation*}
		\item \label{prop:exp_bEM_3} For all  $\mu \in (-1,1]$, there exists a constant $C_\mu >0$ such that for all $\ell, k \in \N$ and $h > 0$ with $\ell(\ell+1)h \le C_c$
		\begin{align*}
			| e^{- \ell ( \ell +1) h\cdot k} - (1+\ell(\ell+1)h)^{-k} |
			& \le C_\mu (\ell(\ell+1))^{1+\mu} h^{1+\mu} \, k \, e^{-\ell(\ell+1)h\cdot (k-1)}\\
			& \le C_\mu (\ell(\ell+1))^{\mu} \, h^{\mu}.
		\end{align*}
	\end{enumerate}
\end{proposition}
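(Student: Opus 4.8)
The plan is to treat part~\ref{prop:exp_bEM_1} first and then bootstrap to part~\ref{prop:exp_bEM_3} by a telescoping argument, mirroring the structure one would use for the forward scheme. Throughout, write $x = \ell(\ell+1)h > 0$; the two claims then become purely scalar statements about the function $g(x) = e^{-x} - (1+x)^{-1}$, since $(\ell(\ell+1))^{1+\mu} h^{1+\mu} = x^{1+\mu}$. For part~\ref{prop:exp_bEM_1} I would split into the regimes $x \le 1$ and $x > 1$. For $x \le 1$: a Taylor expansion gives $e^{-x} = 1 - x + O(x^2)$ and $(1+x)^{-1} = 1 - x + x^2 - \cdots = 1 - x + O(x^2)$, so $g(x) = O(x^2)$, hence $|g(x)| \le C x^2 \le C x^{1+\mu}$ for any $\mu \le 1$ (the last step using $x \le 1$). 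More carefully, one can write $g(x) = (e^{-x} - 1 + x) - (\tfrac{1}{1+x} - 1 + x) = (e^{-x}-1+x) - \tfrac{x^2}{1+x}$ and bound each piece by $x^2/2$ and $x^2$ respectively. For $x > 1$: both $e^{-x} \in (0,1)$ and $(1+x)^{-1} \in (0,1)$, so $|g(x)| \le 1 < x^{1+\mu}$ whenever $\mu > -1$, which is exactly the stated range. This is where the wider exponent range $\mu \in (-1,1]$ for the backward scheme (versus $\mu \in (0,1]$ for forward) comes from: the backward approximant $(1+x)^{-1}$ stays bounded (indeed in $(0,1)$) for all $x>0$, unlike $1-x$ which becomes large and negative, so no positivity of $\mu$ is needed to absorb the large-$x$ tail.

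For part~\ref{prop:exp_bEM_3} I would telescope the difference of the two $k$-fold products. Writing $a = e^{-x}$ and $b = (1+x)^{-1}$, we have
\begin{equation*}
	a^k - b^k = \sum_{j=0}^{k-1} a^{k-1-j}(a-b) b^{j} = (a-b)\sum_{j=0}^{k-1} a^{k-1-j} b^{j}.
\end{equation*}
Under the constraint $x = \ell(\ell+1)h \le C_c$ (I would take $C_c = 1$, or any fixed constant), part~\ref{prop:exp_bEM_1} gives $|a-b| \le C_\mu x^{1+\mu}$. For the sum, since $0 < b = (1+x)^{-1} \le e^{-x/2}$ for $x$ in a bounded range (because $(1+x)^{-1}\le e^{-x/2}$ holds for $x \le c_0$ with $c_0 \approx 1.79$, so choosing $C_c$ below that threshold works), and $a = e^{-x} \le e^{-x/2}$ trivially, each term satisfies $a^{k-1-j} b^j \le e^{-x(k-1)/2}$; alternatively, and more in the spirit of the forward-scheme proof, one bounds $b^j \le a^{j}\cdot e^{x j}\le a^j e^{C_c k}$... — cleaner is to just use $b \le a^{1/2}\cdot(\text{const})$ so that $a^{k-1-j}b^j \le C^k a^{(k-1)/2}$. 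To get the clean statement with $e^{-\ell(\ell+1)h(k-1)}$ rather than $e^{-\ell(\ell+1)h(k-1)/2}$, I would instead compare each term to $a^{k-1} = e^{-x(k-1)}$ by noting $b \le a$ is false in general, so the honest bound is $a^{k-1-j}b^j \le a^{k-1-j}\cdot a^{j} = a^{k-1}$ only if $b\le a$; since actually $b = (1+x)^{-1} \ge e^{-x} = a$ for $x\ge 0$, we have $b^j \ge a^j$, so I should bound $b^j$ by $(1+x)^{-j}$ directly and use $(1+x)^{-1} \le e^{-x/2}$: summing gives $\sum_{j=0}^{k-1} a^{k-1-j}b^j \le k\, e^{-x(k-1)/2}$. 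One then has to reconcile the factor $1/2$ in the exponent with the statement; this is absorbed because on the bounded range $x\le C_c$ one has $e^{-x(k-1)/2} \le e^{C_c(k-1)/2} e^{-x(k-1)} $, but that reintroduces exponential growth in $k$ — so the right move is to keep $e^{-x(k-1)/2}$ and note that the paper's displayed bound with $e^{-\ell(\ell+1)h(k-1)}$ must be read loosely (or: redo the telescoping comparing to $b^{k-1}$ and converting $b^{k-1}=(1+x)^{-(k-1)}\le e^{-x(k-1)/2}$, same issue). The honest and clean route, which I would adopt, is: $a^{k-1-j}b^j \le e^{-x(k-1-j)}e^{-xj/2} \le e^{-x(k-1)/2}$, giving $|a^k-b^k|\le C_\mu x^{1+\mu} k\, e^{-x(k-1)/2}$, and then for the second inequality use that $y\mapsto y^{1+\mu}k e^{-y(k-1)/2}$ with $y = x = \ell(\ell+1)h$ rewritten as $x^{1+\mu} k e^{-x(k-1)/2} = h^{\mu}(\ell(\ell+1))^{\mu}\cdot \big(x\, k\, e^{-x(k-1)/2}\big)$ and the bracket $x k e^{-x(k-1)/2}$ is bounded by a universal constant (maximize over $x>0$ for fixed $k$, or note $xk \le 2\cdot\frac{xk}{2}\le 2 e^{x(k-1)/2}\cdot\frac{k}{k-1}$ type estimate, uniformly in $k\ge 2$, with the $k=1$ case trivial).

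The main obstacle is precisely this last reconciliation: getting the factor $k$ and the exponential $e^{-\ell(\ell+1)h(k-1)}$ to combine so that, after pulling out $h^\mu (\ell(\ell+1))^\mu$, what remains is bounded by a constant independent of $\ell$, $h$, and $k$. The clean way is to observe that $x^{1+\mu} k e^{-x(k-1)/2}$, as a function of $x \in (0,\infty)$, attains its maximum where $\tfrac{1+\mu}{x} = \tfrac{k-1}{2}$, i.e. $x = \tfrac{2(1+\mu)}{k-1}$, at which point the value is $\big(\tfrac{2(1+\mu)}{k-1}\big)^{1+\mu} k\, e^{-(1+\mu)} \le C_\mu\, k \cdot (k-1)^{-(1+\mu)} \le C_\mu$ for $k \ge 2$ since $1+\mu > 0$ (this is exactly why $\mu > -1$ is needed again), while $k=1$ is immediate from part~\ref{prop:exp_bEM_1}. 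I expect everything else — the Taylor estimates, the telescoping identity, the case split at $x = C_c$ — to be routine; only bookkeeping of constants and the uniformity in $k$ require care. If the intended reading of the first displayed inequality in~\ref{prop:exp_bEM_3} genuinely has $e^{-\ell(\ell+1)h(k-1)}$ (not halved), one simply relabels $h \leftrightarrow 2h$ or adjusts $C_c$ — the estimates are robust to such rescaling, and Appendix~\ref{app:reg_exp_approx} presumably spells out the chosen normalization.
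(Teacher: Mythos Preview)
Your argument for part~\ref{prop:exp_bEM_1} is correct and takes a genuinely different route from the paper. You split at $x=\ell(\ell+1)h=1$, using Taylor expansion for $x\le 1$ and the trivial bound $|g(x)|<1<x^{1+\mu}$ for $x>1$; the paper instead derives the integral identity
\[
e^{-\lambda h}-(1+\lambda h)^{-1} \;=\; -\frac{\lambda^{2}}{1+\lambda h}\int_0^h\!\!\int_s^h e^{-\lambda r}\,\dd r\,\dd s,\qquad \lambda=\ell(\ell+1),
\]
and bounds the integrand by $(\lambda r)^{1-\mu}e^{-\lambda r}\le \tilde C_{1-\mu}$, handling all $x>0$ at once. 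Both yield the full range $\mu\in(-1,1]$; yours is more elementary, the paper's more systematic and closer in spirit to the forward-scheme proof.

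For part~\ref{prop:exp_bEM_3} you and the paper start identically with the telescoping $a^k-b^k=(a-b)\sum_{j=0}^{k-1}a^{j}b^{k-1-j}$, and you correctly isolate the obstruction: since $b=(1+x)^{-1}\ge e^{-x}=a$, the forward-scheme bound $b\le a$ is unavailable. The paper's sketch replaces it by $(1+x)^{-1}\le e^{C_c}e^{-x}$ and asserts $\sum\le \frac{e^{C_c}}{1+C_c}\,k\,e^{-x(k-1)}$; but, exactly as you worried, applying that bound to each factor $b^{k-1-j}$ produces a constant raised to the $(k{-}1{-}j)$-th power, and indeed the first displayed inequality as written is too strong (for fixed $x>0$ the left side decays like $(1+x)^{-k}$ while the right side decays like $k\,e^{-xk}$, and the ratio diverges). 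Your replacement $b\le e^{-x/2}$ on a bounded $x$-range, giving $\sum\le k\,e^{-x(k-1)/2}$, is the honest intermediate bound (equivalently one can use $\sum\le k\,b^{k-1}=k(1+x)^{-(k-1)}$), and it is all that the applications in Theorems~\ref{trm: strong_conv_EM} and~\ref{trm: weak_conv_EM} actually require. Your derivation of the second inequality via the factorisation $x^{1+\mu}k\,e^{-x(k-1)/2}=x^{\mu}\cdot\bigl(xk\,e^{-x(k-1)/2}\bigr)$ with the bracket uniformly bounded is correct. One small correction: your final ``clean way'' optimisation bounds $x^{1+\mu}k\,e^{-x(k-1)/2}$ by a constant, which for $\mu>0$ is weaker than the required $\le C_\mu x^{\mu}$; the earlier factorisation is the argument to keep.
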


Applying the forward and backward approximation to~\eqref{eqn: SDEParts} for $m=0$, we obtain the Euler--Maruyama method for the forward scheme
\begin{equation*}
	X_{\ell,0}^{(h)}(t_{k}) = (1-\ell(\ell+1)h) X_{\ell,0}^{(h)}(t_{k-1}) + \sqrt{A_{\ell}} \Delta \beta_{\ell,0}^1(t_{k}),
\end{equation*}
where $\Delta \beta_{\ell,0}^{1}(t_{k}) = \beta_{\ell,0}^1(t_{k}) - \beta_{\ell,0}^1(t_{k-1})$ denotes the increment of the Brownian motion.
Similarly the backward scheme is given by
\begin{equation*}
	X_{\ell,0}^{(h)}(t_{k}) = (1 + \ell (\ell +1)h)^{-1} \bigl( X_{\ell,0}^{(h)}(t_{k-1}) + \sqrt{A_{\ell}} \Delta \beta_{\ell,0}^1(t_{k})\bigr).
\end{equation*}

We write both schemes in one by
\begin{equation}
\label{eqn: EMParts}
X_{\ell,0}^{(h)}(t_{k}) = \xi X_{\ell,0}^{(h)}(t_{k-1}) + \xi^\gd \sqrt{A_{\ell}} \Delta \beta_{\ell,0}^1(t_{k}),
\end{equation}
where $\gd = 0$ in the forward scheme and $\gd = 1$ in the backward scheme.
Recursively, this leads to the representation
\begin{align}
\label{eqn: EulerPartsIter}
X_{\ell,0}^{(h)}(t_k) = \xi^k X_{\ell,0}^0 + \sqrt{A_{\ell}} \sum_{j=1}^k \xi^{k-j+\gd} \Delta \beta_{\ell,0}^1(t_j).
\end{align}
The equations for $m>0$ are obtained in the same way.

Our Euler--Maruyama approximation of~\eqref{eq: SHEshort} is given by
\begin{equation}
\label{eqn: EMtotal}
X^{(\gk,h)}(t_k) 
= \sum_{\ell=0}^{\kappa} X_{\ell,0}^{(h)}(t_k) Y_{\ell,0} 
	+ 2 \sum_{m = 1}^{\ell} \Re(X_{\ell,m}^{(h)}(t_k)) \Re(Y_{\ell,m}) - \Im(X_{\ell,m}^{(h)}(t_k))  \Im(Y_{\ell,m}). 
\end{equation}

Plugging the representation~\eqref{eqn: EulerPartsIter} into~\eqref{eqn: EMtotal}, observing that all stochastic increments have expectation zero, and rewriting the real and imaginary parts in terms of~$Y_{\ell,m}$, we derive the expectation of the Euler--Maruyama method
\begin{equation}\label{eq:exact_EM}
 \E[X^{(\gk,h)}(t_k)]
 	= \sum_{\ell =  0}^{\kappa} \sum_{m = - \ell}^{\ell} \xi^k \, \E[X_{\ell,m}^0] Y_{\ell,m}.
\end{equation}
For the second moment, we proceed similarly for the first term in~\eqref{eqn: EMParts} and use the properties of the independent stochastic increments to obtain
\begin{equation}\label{eq:second_moment_EM}
 \E[\|X^{(\gk, h)}(t_k)\|^2_{L^2(\IS^2)}]
 = \sum_{\ell =  0}^{\kappa} \Bigl( \sum_{m = - \ell}^{\ell} \xi^{2k} \E[|X_{\ell,m}^0|^2] \| Y_{\ell,m}\|^2_{L^2(\mathbb{S}^2;\C)} \Bigr) + A_\ell (1+ 2\ell) \sum_{j = 1}^k \xi^{2(k-j+\delta)} h.
\end{equation}

As a last prerequisite for our convergence analysis, we need regularity properties of exponential functions. As for the approximation properties in the previous propositions, the proof of the following results can be found in Appendix~\ref{app:reg_exp_approx}.
\begin{proposition}\label{prop:reg_exp}
	Assume that $\ell(\ell+1)h \le C_c$.
	The exponential function satisfies the following regularity estimates:
	\begin{enumerate}[label=\alph*)]
		\item \label{prop:reg_exp_2} For all $\mu \in (0,1]$, there exists a constant~$C_\mu$ such that for all $t_k > 0$
		\begin{align*}
			 \sum_{j = 1}^k \int_{t_{j-1}}^{t_j} (e^{ - \ell (\ell+1)(t_k-s)} - e^{ - \ell (\ell+1)(t_k-t_{j-1})})^2  \, \dd s 
			 \le C_\mu (\ell(\ell+1))^{2\mu-1} h^{2\mu}.
		\end{align*}
		\item \label{prop:reg_exp_3} For all $\mu \in (0,1]$, there exists a constant~$C_\mu$ such that for all $t_k > 0$
		\begin{align*}
			 \sum_{j = 1}^k \int_{t_{j-1}}^{t_j} (e^{ - \ell (\ell+1)(t_k-s)} - e^{ - \ell (\ell+1)(t_k-t_j)})^2  \, \dd s 
			  \le C_\mu (\ell(\ell+1))^{2\mu-1} h^{2\mu}.
		\end{align*}		
		\item \label{prop:reg_exp_4} For all $\mu \in [0,1]$, there exists a constant~$C_\mu$ such that for all $t_k > 0$
		\begin{align*}
			 \Bigl| \sum_{j = 1}^k \int_{t_{j-1}}^{t_j} e^{ - 2\ell (\ell+1)(t_k-s)} - e^{ - 2\ell (\ell+1)(t_k-t_{j-1})}  \, \dd s \Bigr|
			 \le C_\mu (\ell(\ell+1))^{\mu - 1} h^{\mu}.
		\end{align*}
		\item \label{prop:reg_exp_5} For all $\mu \in [0,1]$, there exists a constant~$C_\mu$ such that for all $t_k > 0$
		\begin{align*}
			 \Bigl| \sum_{j = 1}^k \int_{t_{j-1}}^{t_j} e^{- 2 \ell (\ell+1)(t_k-s)} - e^{ - 2 \ell (\ell+1)(t_k-t_j)}  \, \dd s \Bigr|
			  \le C_\mu (\ell(\ell+1))^{\mu - 1} h^{\mu}.
		\end{align*}
	\end{enumerate}
\end{proposition}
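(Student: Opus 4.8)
The plan is to prove all four estimates by one common recipe, writing $\lambda := \ell(\ell+1)$ so that the hypothesis reads $\lambda h \le C_c$. On each subinterval $[t_{j-1},t_j]$ the integrand factors as an exponential prefactor times an increment of the form $e^{\pm\lambda\tau}-1$ or $1-e^{-\lambda\tau}$ with $0\le\tau\le h$; I estimate that increment, integrate (which extracts a factor $h$), and then sum over $j$ using a geometric-series bound that absorbs exactly one power of $\lambda^{-1}$. No semigroup theory is needed: everything rests on the elementary inequalities $e^x-1\le xe^x$, $1-e^{-x}\le x$, and $1-e^{-x}\ge xe^{-x}$ for $x\ge 0$.

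\textbf{Step 1 (pointwise bound on a subinterval).} For part \ref{prop:reg_exp_2} and $s\in[t_{j-1},t_j]$ I write
\[
	e^{-\lambda(t_k-s)} - e^{-\lambda(t_k-t_{j-1})}
	= e^{-\lambda(t_k-t_{j-1})}\bigl(e^{\lambda(s-t_{j-1})}-1\bigr) .
\]
Since $0\le s-t_{j-1}\le h$ and $\lambda h\le C_c$, I bound the last factor in two ways: $e^{\lambda(s-t_{j-1})}-1\le\lambda(s-t_{j-1})e^{\lambda h}\le e^{C_c}\lambda h$ and $e^{\lambda(s-t_{j-1})}-1\le e^{C_c}$; taking the weighted geometric mean with weights $\mu$ and $1-\mu$ ($\mu\in(0,1]$) gives $e^{\lambda(s-t_{j-1})}-1\le e^{C_c}(\lambda h)^\mu$, hence $0\le e^{-\lambda(t_k-s)}-e^{-\lambda(t_k-t_{j-1})}\le e^{C_c}(\lambda h)^\mu e^{-\lambda(t_k-t_{j-1})}$. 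For \ref{prop:reg_exp_3} one instead uses $e^{-\lambda(t_k-s)}-e^{-\lambda(t_k-t_j)}$, which is now negative, so its modulus is $e^{-\lambda(t_k-t_j)}(1-e^{-\lambda(t_j-s)})\le(\lambda h)^\mu e^{-\lambda(t_k-t_j)}$ (here $1-e^{-\lambda(t_j-s)}\le\min(\lambda h,1)\le(\lambda h)^\mu$, no $e^{C_c}$ needed). For \ref{prop:reg_exp_4} and \ref{prop:reg_exp_5} one runs the same argument with $\lambda$ replaced by $2\lambda$ throughout, obtaining per-interval bounds of order $(2\lambda h)^\mu e^{-2\lambda(t_k-t_{j-1})}$ resp. $(2\lambda h)^\mu e^{-2\lambda(t_k-t_j)}$.

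\textbf{Step 2 (integrate and sum).} Squaring the pointwise bound and integrating over $[t_{j-1},t_j]$ yields, for \ref{prop:reg_exp_2}, $\int_{t_{j-1}}^{t_j}(\cdots)^2\,\dd s\le e^{2C_c}(\lambda h)^{2\mu}\,h\,e^{-2\lambda(t_k-t_{j-1})}$, and analogously for \ref{prop:reg_exp_3}; for \ref{prop:reg_exp_4} and \ref{prop:reg_exp_5} the unsquared bound gives $e^{2C_c}(2\lambda h)^\mu\,h\,e^{-2\lambda(t_k-\cdot)}$. Then I sum over $j$ using $t_k-t_{j-1}=(k-j+1)h$, the geometric series, and $1-e^{-x}\ge xe^{-x}$:
\[
	\sum_{j=1}^k h\,e^{-2\lambda(t_k-t_{j-1})}
	= h\sum_{i=1}^{k} e^{-2\lambda i h}
	\le \frac{h\,e^{-2\lambda h}}{1-e^{-2\lambda h}}
	\le \frac{1}{2\lambda},
\]
and likewise $\sum_{j=1}^k h\,e^{-2\lambda(t_k-t_j)}\le\frac{e^{2C_c}}{2\lambda}$. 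Multiplying through, \ref{prop:reg_exp_2} and \ref{prop:reg_exp_3} become $\le C_\mu(\lambda h)^{2\mu}\lambda^{-1}=C_\mu\lambda^{2\mu-1}h^{2\mu}$, and \ref{prop:reg_exp_4}, \ref{prop:reg_exp_5} become $\le C_\mu(2\lambda h)^\mu\lambda^{-1}=C_\mu\lambda^{\mu-1}h^\mu$, with $C_\mu$ depending only on $C_c$ and $\mu$. Since $s\mapsto e^{-2\lambda(t_k-s)}$ is monotone on each $[t_{j-1},t_j]$, the integrands in \ref{prop:reg_exp_4} and \ref{prop:reg_exp_5} have a fixed sign there, so the modulus of the sum is the sum of the (already estimated) moduli, which closes those two parts.

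\textbf{Main obstacle.} The argument is elementary, and the only delicate points are bookkeeping ones: in Step 1 one must pick exactly the right pair of ``small'' ($O(\lambda h)$) and ``bounded'' ($O(1)$) estimates so that interpolation lands on the exponent $(\lambda h)^\mu$ (hence $(\lambda h)^{2\mu}$ after squaring), and in Step 2 the geometric-sum bound must absorb precisely one power of $\lambda^{-1}$ — this is what converts $\lambda^{2\mu}h^{2\mu}$ into the claimed $\lambda^{2\mu-1}h^{2\mu}$ and is exactly the place where the hypothesis $\lambda h\le C_c$ enters, keeping $e^{\lambda h}$ and all resulting constants under control.
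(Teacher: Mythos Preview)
Your argument is correct and follows the same overall skeleton as the paper's proof: factor the difference as an exponential prefactor times an increment $1-e^{-\lambda\tau}$ (or $e^{\lambda\tau}-1$), bound the increment, integrate to pick up a factor~$h$, and sum the prefactors via a geometric series that yields a factor~$\lambda^{-1}$.

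The one genuine difference is how the $\mu$-dependence is extracted. You bound each increment pointwise by interpolation, writing $|e^{\lambda\tau}-1|\le(\lambda h)^\mu\cdot C$ from the two endpoint bounds $\lambda h\,e^{C_c}$ and $e^{C_c}$, and then the geometric sum supplies $\lambda^{-1}$ independently. The paper instead keeps the increment as $(1-e^{-\lambda h})^2$ through the summation, lets one factor cancel against the geometric-series denominator $(1-e^{-\lambda h})^{-1}$, and then bounds the single surviving factor $1-e^{-\lambda h}$ by $C(\lambda h)^{2\mu-1}$ via a case distinction on whether $\mu>1/2$, $\mu=1/2$, or $\mu<1/2$ (the last case using $\lambda h\le C_c$). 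Your geometric-mean interpolation is arguably cleaner because it avoids that case split; the paper's cancellation trick is a bit more economical in constants since it never needs $e^{C_c}$ in part~\ref{prop:reg_exp_2}. For parts~\ref{prop:reg_exp_4} and~\ref{prop:reg_exp_5} the paper simply observes that the whole sum collapses to at most~$h$ and then writes $h=(\lambda h)^{1-\mu}\cdot\lambda^{\mu-1}h^\mu\le C_c^{1-\mu}\lambda^{\mu-1}h^\mu$, which is shorter than your route but lands at the same place.
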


Having all basic estimates at hand, we are now ready to prove strong convergence with optimal rates for additive noise given the regularity of the initial condition and the noise.
The proofs are inspired by \cite{Kru14} but bring the semigroup theory and estimates going back to \cite{thomee97} to an elementary level.

\begin{theorem}\label{trm: strong_conv_EM}
Assume that there exist $\ga > 0$ and a constant~$C>0$ such that the angular power spectrum $(A_\ell, \ell \in \N_0)$ satisfies $A_\ell \le C \cdot \ell^{-\ga}$ for $\ell > 0$ and that $X_0 \in L^2(\gO;H^{\eta}(\IS^2))$ for some $\eta > 0$. 
Then for all $\gk \in \N$ and $h > 0$ such that $\gk(\gk+1)h \le C_c$, the strong error between $X^{(\kappa)}$ and $X^{(\gk,h)}$ is uniformly bounded for some constant~$\hat{C}$ on all time grid points~$t_k$ by
\begin{equation*}
	\| X^{(\kappa)}(t_k) - X^{(\gk,h)}(t_k) \|_{L^2(\Omega;L^2(\mathbb{S}^2))} 
	\le \hat{C} \bigl( h^{\min\{1,\eta/2\}} \|X_0\|_{L^2(\gO;H^{\eta}(\mathbb{S}^2))}  +  h^{\min\{1,\ga/4\}} \bigr).
\end{equation*}
\end{theorem}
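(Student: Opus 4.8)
The plan is to bound the squared error in $L^2(\gO;L^2(\IS^2))$ via Parseval's identity on the orthonormal basis $\cY$, after splitting each of the two approximations into the contribution of the initial datum and the contribution of the noise. Comparing \eqref{eq:spectral_approx} with the recursion \eqref{eqn: EulerPartsIter} (and its analogue for $m>0$), the $(\ell,m)$-th modes of the two datum contributions differ by $\bigl(e^{-\ell(\ell+1)t_k}-\xi^{k}\bigr)X_{\ell,m}^0$, while those of the two noise contributions differ by $\sqrt{A_\ell}\sum_{j=1}^{k}\int_{t_{j-1}}^{t_j}\bigl(e^{-\ell(\ell+1)(t_k-s)}-\xi^{k-j+\gd}\bigr)\,\dd\gb(s)$, with $(\xi,\gd)$ as in \eqref{eqn: EMParts} and $\gb$ the pertinent real Brownian motion of \eqref{eqn: SDEParts}. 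As $X_0$ is $\cF_0$-measurable and hence independent of the driving Brownian motions, and the noise contribution is centred, the squared error equals a \emph{deterministic series} plus a \emph{stochastic series} over $0\le\ell\le\gk$, $|m|\le\ell$, which I treat separately; since $\ell(\ell+1)h\le\gk(\gk+1)h\le C_c$ for all such $\ell$, the hypotheses of Propositions~\ref{prop:exp_fEM}--\ref{prop:reg_exp} are in force (in the forward case we take $C_c\le 1$, as stability anyway requires). The forward and backward schemes are handled together, using Proposition~\ref{prop:exp_fEM} and Proposition~\ref{prop:reg_exp}\ref{prop:reg_exp_3} in the former case and Proposition~\ref{prop:exp_bEM} and Proposition~\ref{prop:reg_exp}\ref{prop:reg_exp_2} in the latter.

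For the deterministic series, set $\mu=\min\{1,\eta/2\}$; since $t_k=kh$, the quantity $e^{-\ell(\ell+1)t_k}=e^{-\ell(\ell+1)h\cdot k}$ and its forward (resp.\ backward) Euler iterate $\xi^{k}$ are compared by the second inequality of Proposition~\ref{prop:exp_fEM}\ref{prop:exp_fEM_3} (resp.\ Proposition~\ref{prop:exp_bEM}\ref{prop:exp_bEM_3}), giving $|e^{-\ell(\ell+1)t_k}-\xi^{k}|\le C_\mu(\ell(\ell+1))^{\mu}h^{\mu}$. Then by orthonormality of $\cY$ and $2\mu\le\eta$,
\begin{equation*}
\sum_{\ell=0}^{\gk}\sum_{m=-\ell}^{\ell}\bigl|e^{-\ell(\ell+1)t_k}-\xi^{k}\bigr|^2\E\bigl[|X_{\ell,m}^0|^2\bigr]
\le C_\mu^2 h^{2\mu}\sum_{\ell=0}^{\gk}\sum_{m=-\ell}^{\ell}(1+\ell(\ell+1))^{\eta}\E\bigl[|X_{\ell,m}^0|^2\bigr]
\le C_\mu^2 h^{2\mu}\|X_0\|_{L^2(\gO;H^{\eta}(\IS^2))}^2,
\end{equation*}
which produces the $h^{\min\{1,\eta/2\}}\|X_0\|_{L^2(\gO;H^{\eta}(\IS^2))}$ term.

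For the stochastic series, the It\^o isometry (the integrand being deterministic) reduces the $\ell$-th mode to $A_\ell\sum_{j=1}^{k}\int_{t_{j-1}}^{t_j}\bigl(e^{-\ell(\ell+1)(t_k-s)}-\xi^{k-j+\gd}\bigr)^2\,\dd s$. I split the integrand as $\bigl(e^{-\ell(\ell+1)(t_k-s)}-e^{-\ell(\ell+1)(t_k-t_j^{\ast})}\bigr)+\bigl(e^{-\ell(\ell+1)(t_k-t_j^{\ast})}-\xi^{k-j+\gd}\bigr)$, where $t_j^{\ast}=t_j$ in the forward case and $t_j^{\ast}=t_{j-1}$ in the backward case, and use $(a+b)^2\le 2a^2+2b^2$. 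Summed over $j$, the first piece is bounded by Proposition~\ref{prop:reg_exp}\ref{prop:reg_exp_3} (resp.\ part~\ref{prop:reg_exp_2}) by $C_\mu(\ell(\ell+1))^{2\mu-1}h^{2\mu}$; the second equals $h\sum_{i}\bigl(e^{-\ell(\ell+1)hi}-\xi^{i}\bigr)^2$ after reindexing $i=k-j+\gd$, and squaring the first inequality of Proposition~\ref{prop:exp_fEM}\ref{prop:exp_fEM_3} (resp.\ Proposition~\ref{prop:exp_bEM}\ref{prop:exp_bEM_3}) together with the elementary geometric-series estimate $\sum_{i\ge 0}i^2 e^{-2\ell(\ell+1)h(i-1)}\le C(\ell(\ell+1)h)^{-3}$ (valid for $\ell(\ell+1)h\le C_c$) again yields $C_\mu(\ell(\ell+1))^{2\mu-1}h^{2\mu}$. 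Summing over $m$ generates the weight $A_\ell(1+2\ell)$ seen in \eqref{eq:second_moment_EM}, so the stochastic series is at most
\begin{equation*}
\hat{C}_\mu\,h^{2\mu}\sum_{\ell=1}^{\gk}(1+2\ell)\,A_\ell\,(\ell(\ell+1))^{2\mu-1},
\end{equation*}
whose summand is $\operatorname{O}(\ell^{4\mu-\ga-1})$ by $A_\ell\le C\ell^{-\ga}$. Taking $\mu=\min\{1,\ga/4\}$ makes this series convergent when $\ga>4$ (then $\mu=1$); when $\ga\le 4$ one works with any $\mu<\ga/4$, the remaining tail $\sum_{\ell\le\gk}\ell^{-1}=\operatorname{O}(\log\gk)$ being kept harmless by $\gk=\operatorname{O}(h^{-1/2})$. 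This furnishes the $h^{\min\{1,\ga/4\}}$ term. Adding the two series and using $\sqrt{a+b}\le\sqrt a+\sqrt b$ completes the proof, with $\hat{C}$ depending only on $C$, $\ga$, $\eta$, and $C_c$.

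The crux is the stochastic series: one must match the exponent $k-j+\gd$ of $\xi$ with the right nodal value of the semigroup --- the right endpoint $t_j$ for the forward scheme, the left endpoint $t_{j-1}$ for the backward scheme --- so that \emph{both} pieces of the split are controlled, one by the regularity estimates of Proposition~\ref{prop:reg_exp}, the other by the approximation estimates of Propositions~\ref{prop:exp_fEM}/\ref{prop:exp_bEM} followed by the geometric-type series bound; and then the growth in $\ell$ carried by $(\ell(\ell+1))^{2\mu-1}$ and the weight $1+2\ell$ has to be weighed against the decay $A_\ell\le C\ell^{-\ga}$. It is exactly this interplay, sharpened by the step-size restriction $\gk(\gk+1)h\le C_c$, that yields the optimal exponent $\min\{1,\ga/4\}$ instead of the generic $\tfrac12$. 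Running the forward and backward cases in lockstep via the notation $(\xi,\gd)$ and the paired propositions is routine bookkeeping.
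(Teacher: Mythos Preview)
Your proof is correct and follows essentially the same route as the paper: Parseval on $\cY$, split each mode into the initial-condition and noise contributions (using independence and mean zero to kill the cross term), bound the former via the second inequality of Proposition~\ref{prop:exp_fEM}\ref{prop:exp_fEM_3}/\ref{prop:exp_bEM}\ref{prop:exp_bEM_3}, and handle the latter by It\^o isometry, the split at $t_{j-\gd}$, Proposition~\ref{prop:reg_exp}\ref{prop:reg_exp_2}/\ref{prop:reg_exp_3}, and the first inequality of Proposition~\ref{prop:exp_fEM}\ref{prop:exp_fEM_3}/\ref{prop:exp_bEM}\ref{prop:exp_bEM_3} with $\mu=1$ followed by a geometric/integral bound on $\sum_i i^2 e^{-2\ell(\ell+1)h(i-1)}$. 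The only cosmetic differences are that the paper writes the expansion in real and imaginary parts and bounds the sum by an explicit integral rather than your geometric-series estimate; your remark about the borderline case $\mu=\ga/4$ (log factor absorbed via $\gk=\operatorname{O}(h^{-1/2})$) is a point the paper glosses over in the same way.
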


\begin{proof}
Using the truncated version of~\eqref{eqn: SDE} and~\eqref{eqn: EMtotal}, we write the error in the real and imaginary parts as
\begin{align}
\label{eq: strongfEMtotalparts}
\begin{split}
&\| X^{(\kappa)}(t_k) - X^{(\gk,h)}(t_k) \|_{L^2(\Omega;L^2(\mathbb{S}^2))}^2 \\
& \quad =  \sum_{\ell = 1}^{\kappa} \biggl(\E \left[ | X_{\ell,0}(t_k) - X_{\ell,0}^{(h)}(t_k) |^2 \right] \|Y_{\ell,0}\|_{L^2(\IS^2)}^2 \\
& \hspace*{6em} + 2 \sum_{m = 1}^{\ell} \Bigl( \E \left[ | \Re(X_{\ell,m}(t_k)) - \Re(X_{\ell,m}^{(h)}(t_k)) |^2 \right]   \|\Re Y_{\ell,m}\|_{L^2(\IS^2)}^2\\
& \hspace*{11em} + \E \left[ | \Im(X_{\ell,m}(t_k)) - \Im(X_{\ell,m}^{(h)}(t_k)) |^2 \right] \|\Im Y_{\ell,m}\|_{L^2(\IS^2)}^2 \Bigr) \biggr).
\end{split}
\end{align}
The first difference satisfies with the formulations~\eqref{eq:SODE_solution} and~\eqref{eqn: EulerPartsIter} for $m=0$ that
\begin{align*}
& \E \left[ | X_{\ell,0}(t_k) - X_{\ell,0}^{(\kappa,h)}(t_k) |^2 \right] \\
& \quad = \E \left[ \Bigl| \bigl( e^{- \ell ( \ell +1) t_k} - \xi^k \bigr) X_{\ell,0}^0 + \sqrt{A_{\ell}} \left( \int_0^{t_k} e^{- \ell ( \ell +1)( t_k-s)} \, \dd \beta_{\ell,0}^1(s) - \sum_{j=1}^k \xi^{k-j+\gd} \Delta \beta_{\ell,0}^1(t_j) \right) \Bigr|^2 \right] \\
& \quad = \bigl( e^{- \ell ( \ell +1) t_k} - \xi^k \bigr)^2 \E \left[ | X_{\ell,0}^0 |^2 \right] + A_{\ell}\E \left[ \Bigl| \sum_{j=1}^k \int_{t_{j-1}}^{t_j} e^{- \ell ( \ell +1)( t_k-s)} - \xi^{k-j+\gd} \, \dd \beta_{\ell,0}^1(s) \Bigr|^2 \right] ,
\end{align*}
where we used that the mixed term vanishes due to the mean zero of the Gaussian increments and that $\Delta \beta_{\ell,0}^1(t_j)  = \int_{t_{j-1}}^{t_j} \, \dd \beta_{\ell,0}^1(s)$.

The first term is bounded by Proposition~\ref{prop:exp_fEM}~\ref{prop:exp_fEM_3} and Proposition~\ref{prop:exp_bEM}~\ref{prop:exp_bEM_3}, respectively, by
\begin{equation*}
	\bigl( e^{- \ell ( \ell +1) t_k} - \xi^k \bigr)^2 \E \left[ | X_{\ell,0}^0 |^2 \right]
	\le \left( C_{\eta/2} ( \ell ( \ell +1))^{\eta/2} h^{\eta/2} \right)^2 \E[ | X_{\ell,0}^0 |^2],
\end{equation*}
for $\eta \in (0,2]$. Exploiting that $( \ell ( \ell +1))^{\eta/2}\|Y_{\ell,0}\|_{L^2(\IS^2)} \le \|(\mathrm{Id} - \Delta_{\mathbb{S}^2})^{\eta/2} Y_{\ell,0}\|_{L^2(\IS^2)} = \|Y_{\ell,0}\|_{H^{\eta}(\IS^2)}$ by the definition of the norm and the eigenvalues of~$\Delta_{\IS^2}$, we obtain
\begin{align*}
	& \bigl( e^{- \ell ( \ell +1) t_k} - \xi^k \bigr)^2 \E \left[ | X_{\ell,0}^0 |^2 \right] \|Y_{\ell,0}\|_{L^2(\IS^2;\mathbb{C})}^2
	\le C_{\eta/2}^2 \, h^{\eta} \, \E[ | X_{\ell,0}^0 |^2] \|Y_{\ell,0}\|_{H^{\eta}(\IS^2)}^2.
\end{align*}

Applying the It\^o isometry to the second term yields
\begin{align*}
& \E \left[ \Bigl| \sum_{j=1}^k \int_{t_{j-1}}^{t_j} e^{- \ell ( \ell +1)( t_k-s)} - \xi^{k-j+\gd} \, \dd \beta_{\ell,0}^1(s) \Bigr|^2 \right] 
 = \sum_{j=1}^k \int_{t_{j-1}}^{t_j} \bigl( e^{- \ell ( \ell +1)( t_k-s)} - \xi^{k-j+\gd} \bigr)^2 \, \dd s \\
 & \quad \le 2 \sum_{j = 1}^k \int_{t_{j-1}}^{t_j} \bigl( e^{- \ell ( \ell +1)(t_k-s)} - e^{- \ell ( \ell +1)(t_k-t_{j-\gd})}\bigr)^2 \, \dd s + \int_{t_{j-1}}^{t_j} \left(e^{- \ell ( \ell +1)(t_k-t_{j-\gd})} - \xi^{k-j+\gd} \right)^2 \, \dd s \\
 & \quad \le 2 C_\mu (\ell(\ell+1))^{2\mu - 1} h^{2\mu} + 2 \sum_{j = 1}^k\int_{t_{j-1}}^{t_j} \bigl(e^{- \ell ( \ell +1)(t_k-t_{j-\gd})} - \xi^{k-j+\gd} \bigr)^2 \, \dd s,
\end{align*}
where we applied Proposition~\ref{prop:reg_exp}~\ref{prop:reg_exp_2} and~\ref{prop:reg_exp_3} in the last step for $\mu \in (0,1]$. Using the first inequality in Proposition~\ref{prop:exp_fEM}~\ref{prop:exp_fEM_3} and Proposition~\ref{prop:exp_bEM}~\ref{prop:exp_bEM_3} for $\mu = 1$, respectively, we bound the last term by
\begin{align*}
	& \sum_{j = 1}^k\int_{t_{j-1}}^{t_j} \left(e^{- \ell ( \ell +1)(t_k-t_{j-\gd})} - \xi^{k-j+\gd} \right)^2 \, \dd s\\
		& \quad \le  \sum_{j = 1}^k\int_{t_{j-1}}^{t_j} \left(  C_1 (\ell(\ell+1))^2 h^2 \, (k-j+\gd) \, e^{-\ell(\ell+1)h\cdot (k-j+\gd-1)} \right)^2 \, \dd s\\
		& \quad = C_1^2 (\ell(\ell+1))^4 h^2 h \sum_{j = 1}^k (h(k-j+\gd))^2 \, e^{-2\ell(\ell+1)h\cdot (k-j+\gd - 1)}.
\end{align*}
The key estimate for optimal rates with respect to the regularity of the driving noise is to bound the sum
\begin{align*}
	& h \sum_{j = 1}^k (h(k-j+\gd))^2 \, e^{-2\ell(\ell+1)h\cdot (k-j+\gd - 1)}\\
		& \quad = h \sum_{j = 0}^{k-1} (h(j+\gd))^2 \, e^{-2\ell(\ell+1)h\cdot (j+\gd - 1)}
		\le e^{2 C_c} \int_0^\infty (s+h)^2 \, e^{-2\ell(\ell+1)s} \, \dd s\\
		& \quad = e^{2 C_c} \left( \frac{h^2}{2\ell(\ell+1)} + \frac{h}{2(\ell(\ell+1))^2} + \frac{1}{4(\ell(\ell+1))^3} \right)
\end{align*}
by an integral, which holds since $\ell(\ell+1)h \le C_c$ and the integral is decaying for $s \ge \max\{1,(\ell(\ell+1))^{-1}-h\}$.
Plugging this bound in and resorting, we obtain
\begin{align*}
	& \sum_{j = 1}^k\int_{t_{j-1}}^{t_j} \left(e^{- \ell ( \ell +1)(t_k-t_{j-\gd})} - \xi^{k-j+\gd} \right)^2 \, \dd s\\
	& \quad \le C_1^2 e^{2 C_c} (\ell(\ell+1))^{2\mu-1} h^{2\mu} 
		\bigl( (h\ell(\ell+1))^{4-2\mu} + (h\ell(\ell+1))^{3-2\mu} + (h\ell(\ell+1))^{2-2\mu} \bigr) \\
		& \quad \le \tilde{C} (\ell(\ell+1))^{2\mu-1} h^{2\mu},
\end{align*}
where we used in the last inequality that $\ell(\ell+1)h \le C_c$.

The combination of the estimates on the initial condition and on the stochastic convolution yields
\begin{align*}
& \E \left[ | X_{\ell,0}(t_k) - X_{\ell,0}^{(h)}(t_k) |^2 \right] \|Y_{\ell,0}\|_{L^2(\IS^2)}^2\\
& \quad \le C_{\eta/2}^2 \, h^{\eta} \, \E[ | X_{\ell,0}^0 |^2] \|Y_{\ell,0}\|_{H^{\eta}(\IS^2)}^2
	+ 4 \tilde{C} A_\ell (\ell(\ell+1))^{2\mu-1} h^{2\mu} \|Y_{\ell,0}\|_{L^2(\IS^2)}^2.
\end{align*}
The terms for $m > 0$ are bounded in the same way.
 
Putting all parts of~\eqref{eq: strongfEMtotalparts} together, we bound
\begin{equation*}
\| X^{(\kappa)}(t_k) - X^{(\gk,h)}(t_k) \|_{L^2(\Omega;L^2(\mathbb{S}^2))}^2 
\le  C_{\eta/2}^2 \, h^{\eta} \|X_0\|_{L^2(\gO;H^{\eta}(\mathbb{S}^2))}^2	+ 4 \tilde{C} h^{2\mu} \sum_{\ell =  1}^\gk A_\ell (2\ell+1) (\ell(\ell+1))^{2\mu-1} 
\end{equation*}
and conclude with the observation that the last term satisfies
\begin{equation*}
	\sum_{\ell =  1}^\gk A_\ell (2\ell+1) ( \ell ( \ell +1))^{2\mu - 1}
	\le C \sum_{\ell =  1}^\gk \ell^{-\ga + 1 + 4\mu - 2}
	\le C \, \gk^{4\mu - \ga},
\end{equation*}
which is bounded for $\mu \le \ga/4$. Since $\mu \in (0,1]$, the claim follows.
\end{proof}

Putting Lemma~\ref{lem:Lp_conv_stoch_heat} and Theorem~\ref{trm: strong_conv_EM} together, the total error is bounded by
\begin{equation*}
	\| X(t_k) - X^{(\gk,h)}(t_k) \|_{L^2(\Omega;L^2(\mathbb{S}^2))} 
	\le \hat{C} \bigl( h^{\min\{1,\eta/2\}} \|X_0\|_{L^2(\gO;H^{\eta}(\mathbb{S}^2))}  + \gk^{-\ga/2} + h^{\min\{1,\ga/4\}} \bigr)
\end{equation*}
and the rates are balanced for $\ga = 2 \eta$.

Optimal rates for additive noise and multiplicative noise were derived in~\cite{YubinY} and~\cite{Kru14}, respectively, for convergence up to~$\operatorname{O}(h^{\min\{1,\gb\}/2})$ under the assumption that $X_0 \in L^2(\gO;H^\gb(\IS^2))$ and $\trace((-\Delta_{\IS^2})^{(\gb-1)/2}Q) < + \infty$. Setting $\gb = \eta = \ga/2$, the assumptions coincide with our conditions.

Having shown strong convergence, we continue with the time discretization error of the expectation and the second moment extending Lemma~\ref{lem:weak_conv_stoch_heat} to the fully discrete setting.

\begin{theorem} \label{trm: weak_conv_EM}
Assume that there exist $\ga > 0$ and a constant~$C>0$ such that the angular power spectrum $(A_\ell, \ell \in \N_0)$ satisfies $A_\ell \le C \cdot \ell^{-\ga}$ for all $\ell > 0$ and that $X_0 \in L^2(\gO;H^\eta(\IS^2))$ for some $\eta > 0$. 

Then for all $\gk \in \N$ and $h > 0$ such that $\gk(\gk+1)h \le C_c$, the error of the expectation is uniformly bounded for some constant~$\hat{C}>0$ on all time grid points~$t_k$ by
\begin{equation*}
\| \E[ X^{(\kappa)}(t_k) - X^{(\gk,h)}(t_k)  ]\|_{L^2(\mathbb{S}^2)} 
	\leq \hat{C} h^{\min\{1,\eta/2\}} \left\| \E[X_0] \right\|_{H^{\eta}(\mathbb{S}^2)}.
\end{equation*}

The second moment satisfies under the same assumptions that
	\begin{align*}
		\left| \E\left[ \| X^{(\kappa)}(t_k) \|^2_{L^2(\mathbb{S}^2)} - \| X^{(\gk,h)}(t_k) \|^2_{L^2(\mathbb{S}^2)} \right] \right|
		\le \hat{C} \bigl( h^{\min\{1,\eta\}} \|X_0\|_{L^2(\gO;H^{\eta}(\mathbb{S}^2))}^2
		+ h^{\min\{1,\ga/2\}} \bigr).
	\end{align*}
\end{theorem}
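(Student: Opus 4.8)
The plan is to leverage the exact formulas for the expectation and second moment that were already derived — equations \eqref{eq:exact_spectral}, \eqref{eq:exact_EM}, \eqref{eq:second_moment_spectral}, \eqref{eq:second_moment_EM} — so that both statements reduce to estimating the scalar quantities $|e^{-\ell(\ell+1)t_k} - \xi^k|$ and $|e^{-2\ell(\ell+1)t_k} - \xi^{2k}|$ together with a telescoping difference of the stochastic-convolution variance terms. For the expectation, the difference $\E[X^{(\kappa)}(t_k)] - \E[X^{(\gk,h)}(t_k)]$ is, term by term in the orthonormal basis $\cY$, just $(e^{-\ell(\ell+1)t_k} - \xi^k)\E[X_{\ell,m}^0]Y_{\ell,m}$. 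So first I would take the squared $L^2(\IS^2)$-norm, which by orthonormality splits into $\sum_{\ell=1}^\gk\sum_m (e^{-\ell(\ell+1)t_k}-\xi^k)^2 |\E[X_{\ell,m}^0]|^2\|Y_{\ell,m}\|^2$, apply Proposition~\ref{prop:exp_fEM}~\ref{prop:exp_fEM_3} or Proposition~\ref{prop:exp_bEM}~\ref{prop:exp_bEM_3} with $\mu=\eta/2$ (valid since $\gk(\gk+1)h\le C_c$) to get a factor $C_{\eta/2}^2(\ell(\ell+1))^{\eta}h^{\eta}$, and then absorb $(\ell(\ell+1))^{\eta/2}\|Y_{\ell,m}\|_{L^2}$ into $\|Y_{\ell,m}\|_{H^\eta}$ exactly as in the proof of Theorem~\ref{trm: strong_conv_EM}. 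Summing over $\ell,m$ reassembles $\|\E[X_0]\|_{H^\eta(\IS^2)}^2$; taking square roots and using $\min\{1,\eta/2\}$ to handle $\eta>2$ finishes the first claim.

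For the second moment I would first isolate the two contributions. Comparing \eqref{eq:second_moment_spectral} and \eqref{eq:second_moment_EM}, the difference is
\begin{align*}
\E[\|X^{(\kappa)}(t_k)\|^2] - \E[\|X^{(\gk,h)}(t_k)\|^2]
	&= \sum_{\ell=1}^\gk \Bigl( \sum_{m} (e^{-2\ell(\ell+1)t_k} - \xi^{2k})\E[|X_{\ell,m}^0|^2]\|Y_{\ell,m}\|^2 \Bigr)\\
	&\quad + \sum_{\ell=1}^\gk A_\ell(1+2\ell)\Bigl( \tfrac{1-e^{-2\ell(\ell+1)t_k}}{2\ell(\ell+1)} - \sum_{j=1}^k \xi^{2(k-j+\gd)}h \Bigr).
\end{align*}
The initial-condition term is handled like the expectation but with $e^{-2\ell(\ell+1)t_k}-\xi^{2k}$: I would note $\xi^{2k}=(\xi^2)^k$ and that $\xi^2$ is itself the forward/backward approximation parameter associated to the doubled rate $2\ell(\ell+1)$ — more precisely one writes $|e^{-2\ell(\ell+1)t_k}-\xi^{2k}| = |e^{-\ell(\ell+1)t_k}-\xi^k|\,|e^{-\ell(\ell+1)t_k}+\xi^k| \le 2|e^{-\ell(\ell+1)t_k}-\xi^k|$ since $0\le \xi\le 1$ — and then Proposition~\ref{prop:exp_fEM}/\ref{prop:exp_bEM} with $\mu=\eta/2$ gives $(\ell(\ell+1))^{\eta}h^{\eta}$, absorbed into $\|Y_{\ell,m}\|_{H^\eta}$, summing to $h^{\min\{1,\eta\}}\|X_0\|_{L^2(\gO;H^\eta)}^2$.

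The main obstacle is the variance term: I must show $\bigl|\tfrac{1-e^{-2\ell(\ell+1)t_k}}{2\ell(\ell+1)} - \sum_{j=1}^k \xi^{2(k-j+\gd)}h\bigr| \le \hat{C}(\ell(\ell+1))^{\mu-1}h^{\mu}$ with $\mu$ chosen so that $\sum_\ell A_\ell(1+2\ell)(\ell(\ell+1))^{\mu-1}\lesssim \gk^{2\mu-\ga}$ is bounded, i.e.\ $\mu\le\ga/2$. The strategy is to interpose the exact integral: write $\tfrac{1-e^{-2\ell(\ell+1)t_k}}{2\ell(\ell+1)} = \int_0^{t_k} e^{-2\ell(\ell+1)(t_k-s)}\,\dd s = \sum_{j=1}^k\int_{t_{j-1}}^{t_j} e^{-2\ell(\ell+1)(t_k-s)}\,\dd s$ and $\sum_{j=1}^k\xi^{2(k-j+\gd)}h = \sum_{j=1}^k\int_{t_{j-1}}^{t_j}\xi^{2(k-j+\gd)}\,\dd s$, then split $e^{-2\ell(\ell+1)(t_k-s)}-\xi^{2(k-j+\gd)} = (e^{-2\ell(\ell+1)(t_k-s)}-e^{-2\ell(\ell+1)(t_k-t_{j-\gd})}) + (e^{-2\ell(\ell+1)(t_k-t_{j-\gd})}-\xi^{2(k-j+\gd)})$. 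The first bracket, summed and integrated, is controlled by Proposition~\ref{prop:reg_exp}~\ref{prop:reg_exp_4} and~\ref{prop:reg_exp_5} (now with the full range $\mu\in[0,1]$, which is what permits $\mu$ up to $1$ and hence the rate $h^{\min\{1,\ga/2\}}$); the second bracket is $\sum_j h\,|e^{-2\ell(\ell+1)h(k-j+\gd)} - (\xi^2)^{k-j+\gd}|$, bounded via the first inequality of Proposition~\ref{prop:exp_fEM}~\ref{prop:exp_fEM_3}/Proposition~\ref{prop:exp_bEM}~\ref{prop:exp_bEM_3} applied to the doubled rate by $C_1(2\ell(\ell+1))^2h^2\,(k-j+\gd)e^{-2\ell(\ell+1)h(k-j+\gd-1)}$, and then $h\sum_{j}(h(k-j+\gd))e^{-2\ell(\ell+1)h(k-j+\gd-1)}$ is dominated by $e^{2C_c}\int_0^\infty(s+h)e^{-2\ell(\ell+1)s}\,\dd s = e^{2C_c}\bigl(\tfrac{h}{2\ell(\ell+1)}+\tfrac1{4(\ell(\ell+1))^2}\bigr)$, exactly the integral-comparison trick used in Theorem~\ref{trm: strong_conv_EM}; inserting $(h\ell(\ell+1))^{1-\mu}$ and $(h\ell(\ell+1))^{2-\mu}\le C_c^{2-\mu}$ yields $(\ell(\ell+1))^{\mu-1}h^\mu$. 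Multiplying by $A_\ell(1+2\ell)$, summing over $\ell\le\gk$, and using $A_\ell\le C\ell^{-\ga}$ gives $C\gk^{2\mu-\ga}h^\mu$, bounded when $\mu\le\min\{1,\ga/2\}$, which is the asserted rate.
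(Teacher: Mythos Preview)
Your proposal is correct and follows essentially the same route as the paper: exact formulas \eqref{eq:exact_spectral}--\eqref{eq:second_moment_EM} for expectation and second moment, then Proposition~\ref{prop:exp_fEM}\ref{prop:exp_fEM_3}/\ref{prop:exp_bEM}\ref{prop:exp_bEM_3} for the initial-condition terms and the split $e^{-2\ell(\ell+1)(t_k-s)}-\xi^{2(k-j+\gd)} = (\cdot - e^{-2\ell(\ell+1)(t_k-t_{j-\gd})}) + (e^{-2\ell(\ell+1)(t_k-t_{j-\gd})} - \xi^{2(k-j+\gd)})$ together with Proposition~\ref{prop:reg_exp}\ref{prop:reg_exp_4}/\ref{prop:reg_exp_5} and the integral-comparison estimate for the variance term.

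One slip to fix: in the initial-condition contribution to the second moment you write ``Proposition~\ref{prop:exp_fEM}/\ref{prop:exp_bEM} with $\mu=\eta/2$ gives $(\ell(\ell+1))^{\eta}h^{\eta}$''. The proposition with $\mu=\eta/2$ gives only $(\ell(\ell+1))^{\eta/2}h^{\eta/2}$; to obtain $(\ell(\ell+1))^{\eta}h^{\eta}$ you must take $\mu=\min\{1,\eta\}$. Your factorization $|e^{-2\ell(\ell+1)t_k}-\xi^{2k}|\le 2|e^{-\ell(\ell+1)t_k}-\xi^k|$ is fine (and valid since $0\le\xi\le1$), but it is not needed: the paper simply observes that $e^{-2\ell(\ell+1)t_k}=e^{-\ell(\ell+1)h\cdot 2k}$ and applies the proposition directly with step index $2k$ in place of~$k$. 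Similarly, for the second bracket in the variance term, it is not an application ``to the doubled rate'' but again to the original rate $\ell(\ell+1)$ with step index $2(k-j+\gd)$; your stated bound and the subsequent integral comparison are nonetheless correct up to harmless constants.
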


\begin{proof}
We observe first that
\begin{align*}
\E [ X^{(\kappa)}(t_k) ] - \E [ X^{(\gk,h)}(t_k) ]  =  \sum_{\ell = 0}^{\kappa} \sum_{m = - \ell}^{\ell} \bigl( e^{- \ell (\ell +1)t_k} - \xi^k \bigr) \E[X_{\ell,m}^0] Y_{\ell,m}
\end{align*}
using \eqref{eq:exact_spectral} and \eqref{eq:exact_EM} combined with the linearity of the expectation.

Using Proposition~\ref{prop:exp_fEM} \ref{prop:exp_fEM_3} or Proposition~\ref{prop:exp_bEM} \ref{prop:exp_bEM_3}, respectively, we bound the above by
\begin{align*}
& \| \E [ X^{(\kappa)}(t_k) ] - \E [ X^{(\gk,h)}(t_k) ] \|_{L^2(\mathbb{S}^2)}^2\\
& \quad \leq  \sum_{\ell = 1}^{\kappa} \sum_{m = - \ell}^{\ell} \bigl( C_{\eta/2} ( \ell ( \ell +1))^{\eta/2} h^{\eta/2} \bigr)^2 \, \E[ | X_{\ell,m}^0 |^2] \left\|  Y_{\ell,m}  \right\|_{L^2(\mathbb{S}^2;\C)}^2 \\
& \quad \le  \sum_{\ell = 1}^{\kappa} C_{\eta/2}^2 \, h^{\eta}\sum_{m = - \ell}^{\ell} \E[ | X_{\ell,m}^0 |^2] \| (\mathrm{Id} - \Delta_{\mathbb{S}^2} )^{\eta/2}  Y_{\ell,m}  \|_{L^2(\mathbb{S}^2;\C)}^2
\leq  C_{\eta/2}^2 \, h^{\eta}  \, \| \E[ X_0 ]\|_{H^{\eta}(\mathbb{S}^2)}^2
\end{align*}
for $\eta \in (0,2]$. Taking the square root finishes the proof of the first claim.

For the second moment, we apply \eqref{eq:second_moment_spectral} and \eqref{eq:second_moment_EM} to get
\begin{align}
\label{eq: secmom_parts_proof}
\begin{split}
& \E\left[ \| X^{(\kappa)}(t_k) \|^2_{L^2(\mathbb{S}^2)} - \| X^{(\gk,h)}(t_k) \|^2_{L^2(\mathbb{S}^2)} \right] \\
& \quad = \sum_{\ell = 1}^{\kappa} \sum_{m = - \ell}^{\ell} \bigl( e^{- 2 \ell (\ell +1)t_k} - \xi^{2k} \bigr) \E[|X_{\ell,m}^0|^2] \| Y_{\ell,m}\|^2_{L^2(\mathbb{S}^2)}  \\
& \hspace*{4em} + A_\ell (1+ 2\ell) \Bigl( (2\ell(\ell+1))^{-1} (1 - e^{- 2 \ell (\ell+1)t_k}) - \sum_{j = 1}^k \xi^{2(k-j+\gd)} h \Bigr)  \\
& \quad = \sum_{\ell = 1}^{\kappa} \sum_{m = - \ell}^{\ell} \bigl( e^{- 2 \ell (\ell +1)t_k} - \xi^{2k} \bigr) \E[|X_{\ell,m}^0|^2] \| Y_{\ell,m}\|^2_{L^2(\mathbb{S}^2)} \\
& \hspace*{4em} + A_\ell (1+ 2\ell) \Bigl( \sum_{j = 1}^k \int_{t_{j-1}}^{t_j} e^{- 2 \ell (\ell+1)(t_k-s)} - \xi^{2(k-j+\gd)}  \, \dd s  \Bigr), 
\end{split}
\end{align}
using in the last equation that
\begin{equation*}
	(2\ell(\ell+1))^{-1} (1 - e^{- 2 \ell (\ell+1)t_k}) 
		= \int_0^{t_k} e^{- 2 \ell (\ell+1)(t_k-s)} \, \dd s 
		= \sum_{j = 1}^k \int_{t_{j-1}}^{t_j} e^{- 2 \ell (\ell+1)(t_k-s)} \, \dd s .
\end{equation*}
Similarly to the proof of Theorem~\ref{trm: strong_conv_EM}, we split
\begin{equation*}
e^{- 2 \ell ( \ell +1)(t_k-s)} - \xi^{2(k-j+\gd)} 
= (e^{- 2 \ell ( \ell +1)(t_k-s)} - e^{- 2 \ell ( \ell +1)(t_k-t_{j-\gd})})
	 + (e^{- 2 \ell ( \ell +1)(t_k-t_{j-\gd})} - \xi^{2(k-j+\gd)})
\end{equation*} 
and obtain two integrals in \eqref{eq: secmom_parts_proof} which we bound separately. To the first integral we apply Proposition~\ref{prop:reg_exp}\ref{prop:reg_exp_4} or~\ref{prop:reg_exp_5}, respectively. The second one can be bounded in a similar way as the stochastic term in the proof of Theorem~\ref{trm: strong_conv_EM}. Using the first inequality in Proposition~\ref{prop:exp_fEM}\ref{prop:exp_fEM_3} or Proposition~\ref{prop:exp_bEM}\ref{prop:exp_bEM_3} for $\mu = 1$, respectively, and resorting the terms, we start with
\begin{align*}
	& \Bigl| \sum_{j = 1}^k \int_{t_{j-1}}^{t_j} e^{- 2 \ell (\ell+1)(t_k-s)} - \xi^{2(k-j+\gd)}  \, \dd s \Bigr|\\
	& \quad \le C_1 2 (\ell(\ell+1))^2 h^2 \sum_{j = 1}^k h(k-j+\gd) \, e^{-2\ell(\ell+1)h\cdot (k-j+\gd - 1)}.
\end{align*}
Again we bound the last term by the corresponding integral to obtain
\begin{align*}
	 h \sum_{j = 1}^k h(k-j+\gd) \, e^{-2\ell(\ell+1)h\cdot (k-j+\gd - 1)}
	& \le e^{2 C_c} \int_0^\infty (s+h) \, e^{-2\ell(\ell+1)s} \, \dd s\\
	& = e^{2 C_c} \left( \frac{h}{2(\ell(\ell+1))} + \frac{1}{4(\ell(\ell+1))^2} \right),
\end{align*}
since $\ell(\ell+1)h \le C_c$, and conclude using the same bound that
\begin{equation*}
	\Bigl| \sum_{j = 1}^k\int_{t_{j-1}}^{t_j} e^{- 2\ell ( \ell +1)(t_k-t_{j-\gd})} - \xi^{2(k-j+\gd)} \, \dd s \Bigr|
	\le \tilde{C} (\ell(\ell+1))^{\mu-1} h^{\mu}.
\end{equation*}

The first term in~\eqref{eq: secmom_parts_proof} is bounded using as in the proof of Theorem~\ref{trm: strong_conv_EM} Proposition~\ref{prop:exp_fEM}\ref{prop:exp_fEM_3} or Proposition~\ref{prop:exp_bEM}\ref{prop:exp_bEM_3}, respectively. 
All together we get
\begin{align*}
& \E\left[ \| X^{(\kappa)}(t_k) \|^2_{L^2(\mathbb{S}^2)} - \| X^{(\gk,h)}(t_k) \|^2_{L^2(\mathbb{S}^2)} \right] \\
& \quad \leq C_{\min\{1,\eta\}} h^{\min\{1,\eta\}} \|X_0\|_{L^2(\gO;H^{\eta}(\mathbb{S}^2))}^2
+ 2 \tilde{C} h^{\mu} \sum_{\ell = 1}^{\gk} A_\ell (2\ell+1) (\ell(\ell+1))^{\mu-1}
\end{align*}
for $\mu \in (0,1]$.

We finish the proof by observing that the last term satisfies
\begin{equation*}
	\sum_{\ell = 1}^{\gk} A_\ell (2\ell+1) (\ell(\ell+1))^{\mu-1}
		\le C \sum_{\ell = 0}^{\gk} \ell^{-\ga + 1 + 2\mu - 1}
		\le C \gk^{2\mu-\ga},
\end{equation*}
which is bounded for all $\mu \le \min\{1, \ga/2\}$.
\end{proof}

Putting together Lemma~\ref{lem:weak_conv_stoch_heat} and Theorem~\ref{trm: weak_conv_EM}, the total errors are bounded by
\begin{equation*}
	\| \E[ X(t_k) - X^{(\gk,h)}(t_k)  ]\|_{L^2(\mathbb{S}^2)} \leq C h^{\min\{1,\eta/2\}} \left\| \E[X_0] \right\|_{H^{\eta}(\mathbb{S}^2)}.
\end{equation*}
and
\begin{equation*}
	\left| \E\left[ \| X(t_k) \|^2_{L^2(\mathbb{S}^2)} - \| X^{(\gk,h)}(t_k) \|^2_{L^2(\mathbb{S}^2)} \right] \right|
	\le C \bigl( h^{\min\{1,\eta\}} \|X_0\|_{L^2(\gO;H^{\eta}(\mathbb{S}^2))}^2
	+ \gk^{-\ga} + h^{\min\{1,\ga/2\}} \bigr).
\end{equation*}
While the error in the expectation coincides with the strong error in Theorem~\ref{trm: strong_conv_EM}, due to the properties of the corresponding deterministic PDE, the error rate in the second moment is twice that of strong convergence under fixed regularity properties. We are thus able to confirm the rule of thumb that the weak rate is twice the strong one with time convergence limited by~$1$.

\section{Numerical simulation}\label{sec-numerics}

We are now ready to confirm our theoretical results from Sections~\ref{sec-spec} and~\ref{sec-Euler} with numerical experiments. We compare the convergence rates of the different errors for the spectral approximation, the forward and the backward Euler--Maruyama scheme.
\begin{figure}[htb]
  \begin{subfigure}[c]{0.3\textwidth}
  	\includegraphics[width=\textwidth]{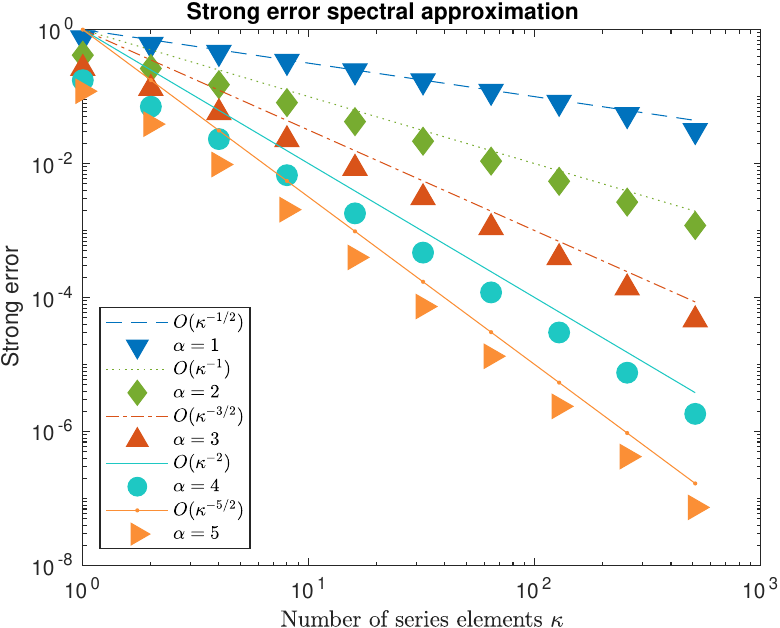}  
  	\caption{Strong error.}
    \label{subfig:f1}
  \end{subfigure}
  \hfill
  \begin{subfigure}[c]{0.3\textwidth}
  	\includegraphics[width=\textwidth]{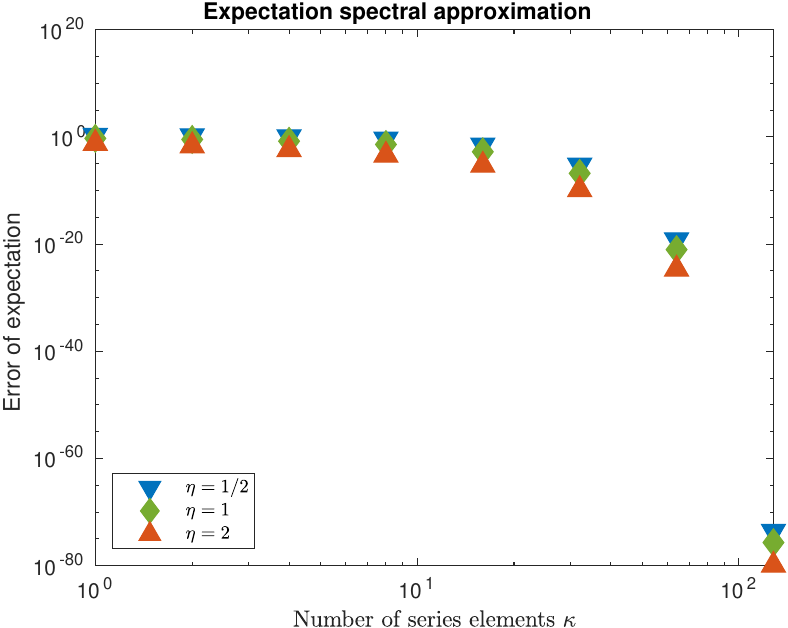}  
  	\caption{Error expectation.}
    \label{subfig:f2}
  \end{subfigure}
  \hfill
  \begin{subfigure}[c]{0.3\textwidth}
  	\includegraphics[width=\textwidth]{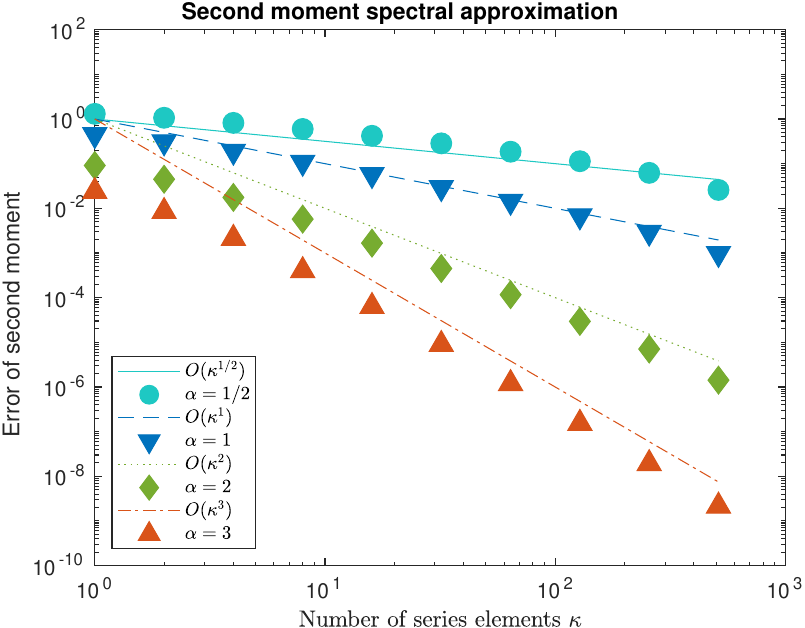} 
  	\caption{Error second moment.}
   \label{subfig:f3}
  \end{subfigure}
  \caption{Convergence of spectral approximation for different~$\ga$.}
  \label{fig:spec_appr}
\end{figure}

For the spectral approximation, we use a reference solution with $\gk = 2^{10}$ at time $T=1$ and compare it to the approximations based on $\kappa = 2^j$ for $j = 0, \ldots , 9$. In Figure~\ref{subfig:f1} we computed the expectations of the strong error explicitly while we used $10$~Monte Carlo samples in Figure~\ref{fig:f10}. The obtained rates for $\ga=1,\ldots,5$ coincide with those proven in Lemma~\ref{lem:Lp_conv_stoch_heat}. Since the error in the initial condition converges exponentially fast and we cannot see a difference in the convergence plots, we set $X_0 = 0$.

This exponential convergence is visible in Figure~\ref{subfig:f2}, which confirms the convergence of the expectation in Lemma~\ref{lem:weak_conv_stoch_heat}. Due to the fast smoothing of the solution, we use $T=0.01$. Setting $X_0=0$ and computing the expectations explicitly, we confirm the convergence rates of the second moments from Lemma~\ref{lem:weak_conv_stoch_heat} for $\ga=1/2,1,2,3$ in Figure~\ref{subfig:f3}.
\begin{figure}[htb]
\centering
  \begin{subfigure}[c]{0.3\textwidth}
  	\includegraphics[width=\textwidth]{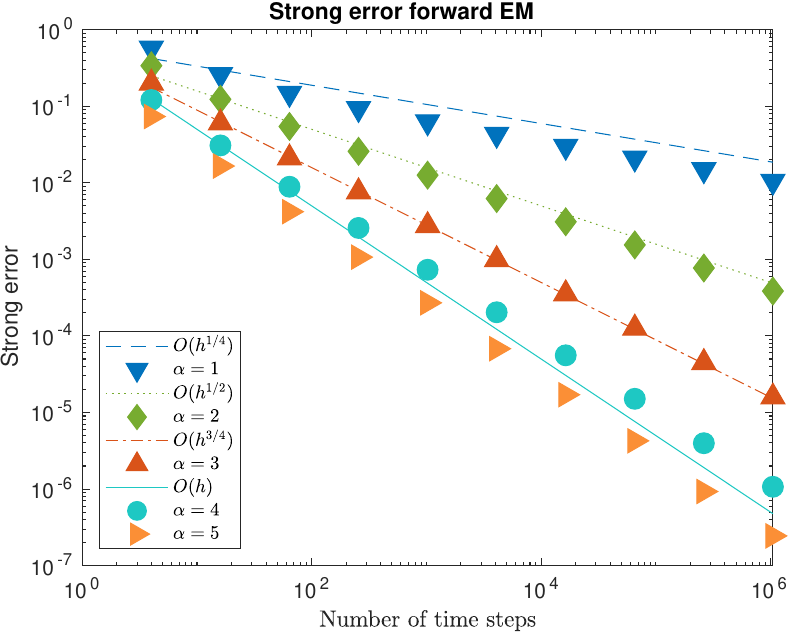}  
  	\caption{Strong error.}
    \label{subfig:f4}
  \end{subfigure}
  \hfill
  \begin{subfigure}[c]{0.3\textwidth}
  	\includegraphics[width=\textwidth]{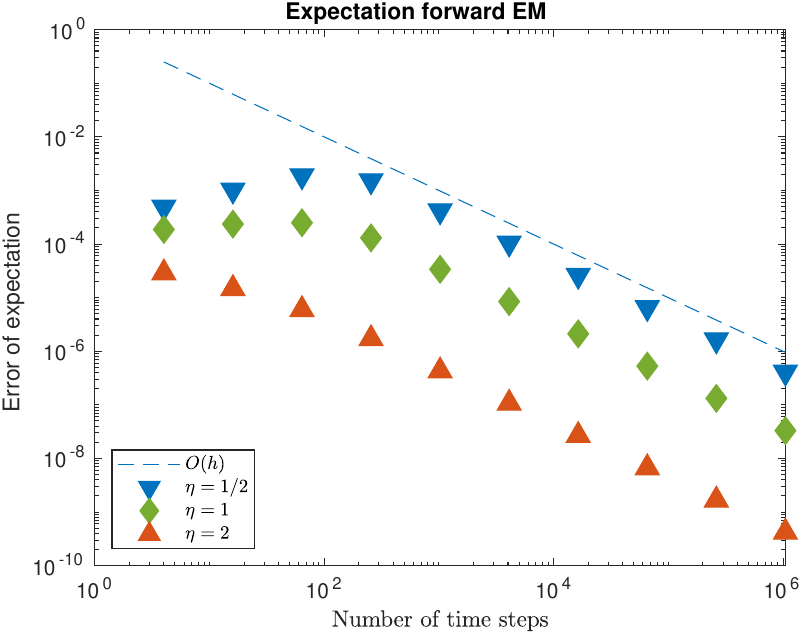}  
  	\caption{Error expectation.}
    \label{subfig:f5}
  \end{subfigure}
  \hfill
  \begin{subfigure}[c]{0.3\textwidth}
  	\includegraphics[width=\textwidth]{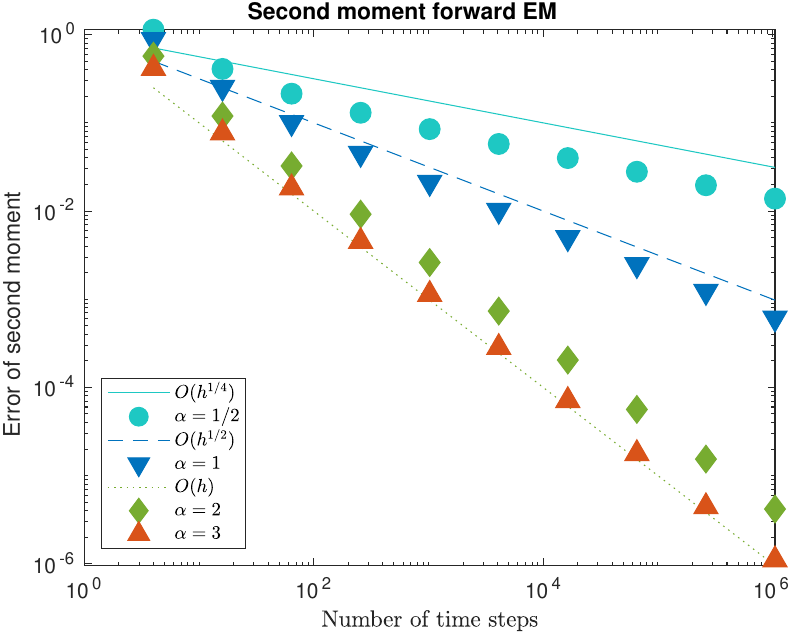} 
  	\caption{Error second moment.}
   \label{subfig:f6}
  \end{subfigure}
    \caption{Convergence of the forward Euler--Maruyama scheme with respect to the time step size~$h$ for different~$\ga$.}
    \label{fig:fEM}
\end{figure}

Having verified the spectral convergence, it remains to simulate the time discretization with the forward and backward Euler--Maruyama scheme. For that we focus on the error between $X^{(\kappa)}$ and $X^{(\gk,h)}$. We simulate on time grids with step size $h = 2^{-2\cdot m}$ for $m = 1, \ldots, 10$ coupled with $\gk = 2^m$ to guarantee stability for the forward Euler--Maruyama scheme and since larger~$\gk$ do not change the simulation results. As for the spectral approximations, we set $X_0 = 0$ to focus on the convergence with respect to the smoothness of the noise given by~$\ga$. The results for the forward Euler--Maruyama scheme in Figure~\ref{subfig:f4} using the exact expectations confirm the expected convergence of $\operatorname{O}(h^{\min\{1,\ga/4\}})$ from Theorem~\ref{trm: strong_conv_EM}.
Similar results are obtained for the backward Euler--Maruyama method in Figure~\ref{subfig:f7}. For completeness we added the corresponding results for the forward and backward scheme based on $10$~Monte Carlo samples and with reference solution using $h = 2^{-14}$ and $\gk = 2^7$ in Figures~\ref{fig:f11} and~\ref{fig:f12}.
\begin{figure}[htb]
	\centering
	\hfill
	\begin{subfigure}[c]{0.3\textwidth}
		\includegraphics[width=\textwidth]{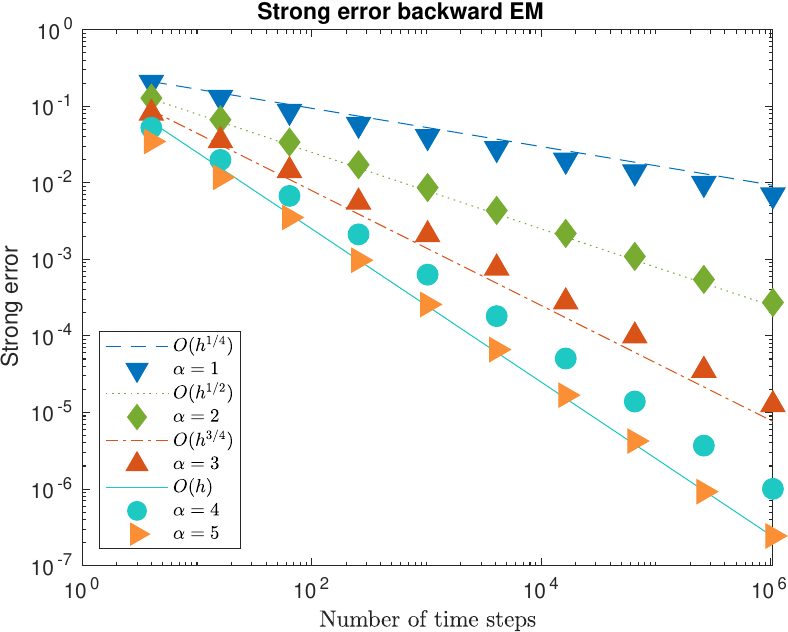}    	
		\caption{Strong error.}
		\label{subfig:f7}
	\end{subfigure}
	\hfill
	\begin{subfigure}[c]{0.3\textwidth}
		\includegraphics[width=\textwidth]{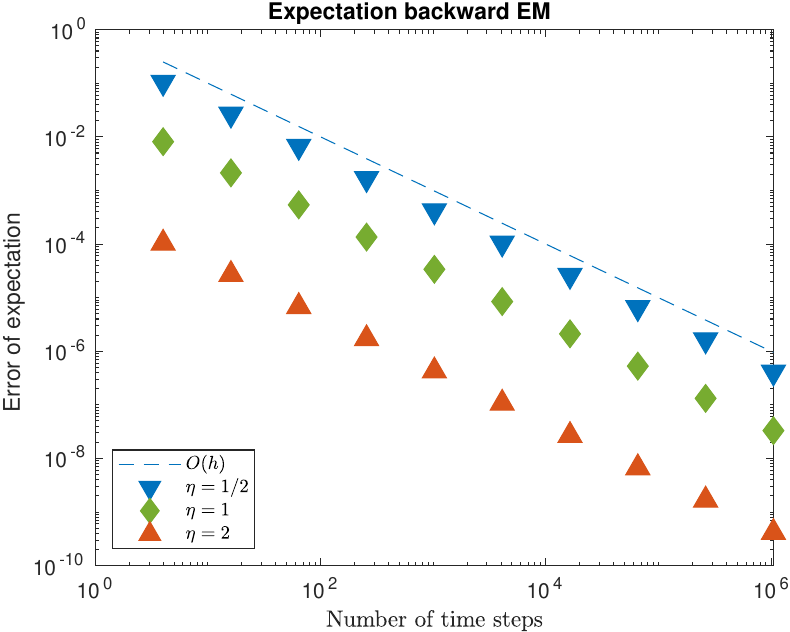}  
		\caption{Error expectation.}
		\label{subfig:f8}
	\end{subfigure}
	\hfill
	\begin{subfigure}[c]{0.3\textwidth}
		\includegraphics[width=\textwidth]{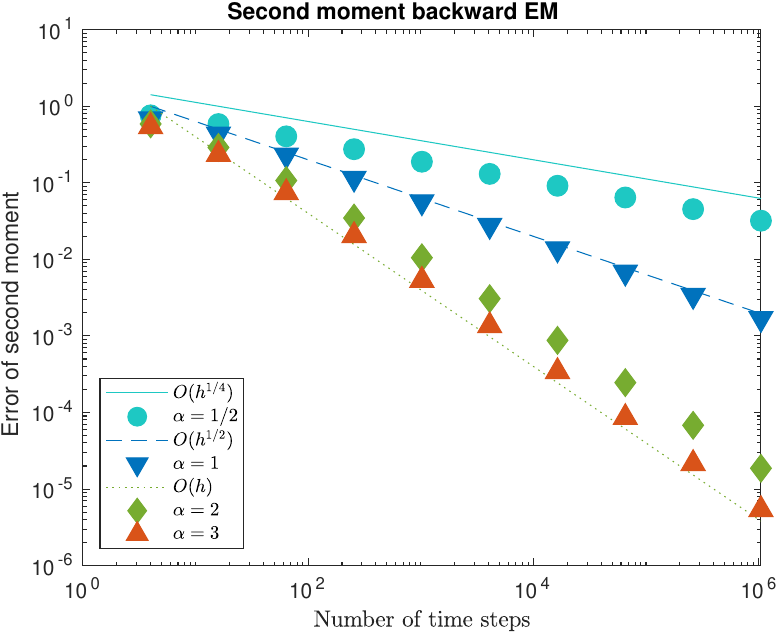} 
		\caption{Error second moment.}
		\label{subfig:f9}
	\end{subfigure}
	\caption{Convergence of the backward Euler--Maruyama scheme with respect to the time step size~$h$ for different~$\ga$.}
	\label{fig:bEM}
\end{figure}

Figure~\ref{subfig:f5} and Figure~\ref{subfig:f8} show the simulated convergence of the expectation for $\eta = 1/2, 1, 2$, where we would expect from Theorem~\ref{trm: weak_conv_EM} no convergence, convergence of rate~$1/2$ and~$1$, respectively. We used $T=0.01$ to minimize the smoothing over time. Still it is clear that all solutions are smooth for finite~$\gk$. Therefore the simulations all show $\operatorname{O}(h)$ convergence but with different error constant depending on~$\eta$.

As for the strong error, we set $X_0 = 0$ in the simulation of the error of the second moment to focus on the convergence with respect to the noise smoothness~$\ga$. In Figures~\ref{subfig:f6} and~\ref{subfig:f9} for the forward and backward Euler--Maruyama schemes, we observe convergence of $\operatorname{O}(h^{\min\{1,\ga/2\}})$, which confirms Theorem~\ref{trm: weak_conv_EM} for the second moment.
\begin{figure}[htb]
  \begin{subfigure}[t]{0.3\textwidth}
  	\includegraphics[width=\textwidth]{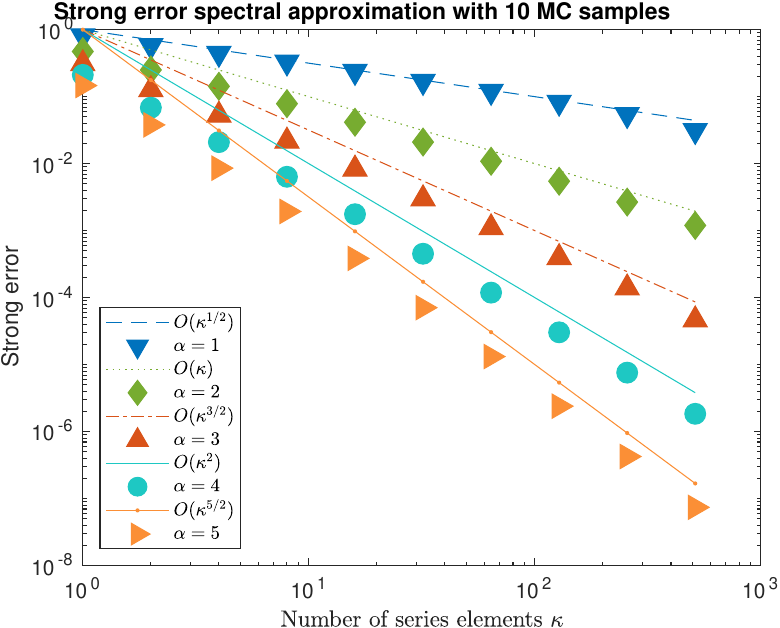}  
  	\caption{Spectral approximation.}
    \label{fig:f10}
  \end{subfigure}
  \hfill
  \begin{subfigure}[t]{0.3\textwidth}
  	\includegraphics[width=\textwidth]{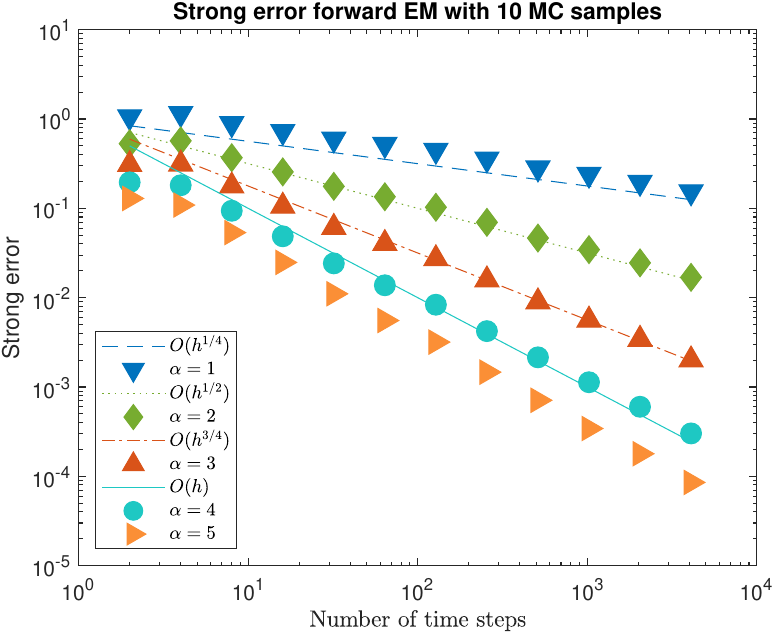}  
  	\caption{Forward Euler--Maruyama.}
    \label{fig:f11}
  \end{subfigure}
  \hfill
  \begin{subfigure}[t]{0.32\textwidth}
  	\includegraphics[width=\textwidth]{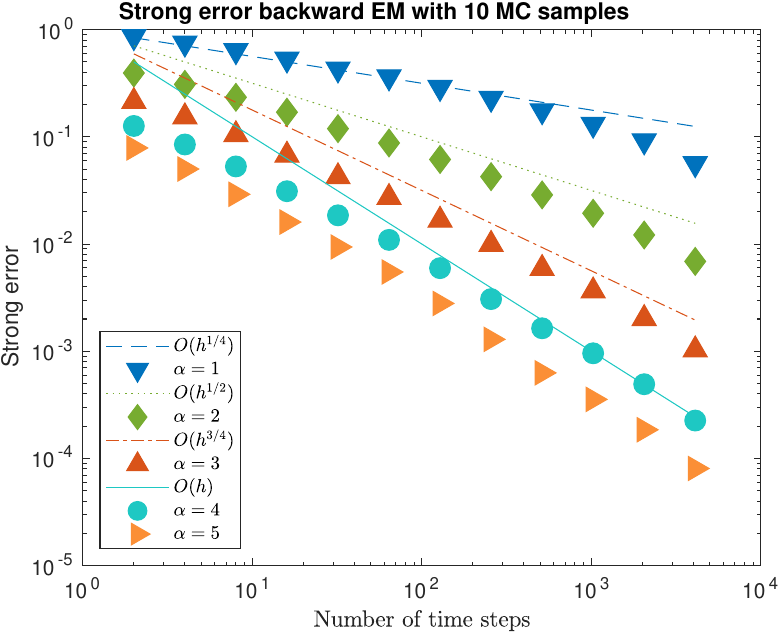} 
  	\caption{Backward Euler--Maruyama.}
   \label{fig:f12}
  \end{subfigure}
  \caption{Strong convergence error based on $10$~Monte Carlo samples.}
  \label{fig:MC}
\end{figure}

\appendix

\section{Properties of the solution}\label{app:prop_sol}

Let us consider the expectation of the solution. It holds that
\begin{equation*}
	\E[X(t)]
	= \E[X_0] + \int_0^t \Delta_{\mathbb{S}^2} \E[X(s)] \, \dd s
\end{equation*}
due to the linearity of the expectation and the mean zero property of the $Q$-Wiener process. Setting $u(t) = \E[X(t)]$ and $u_0 = \E[X_0]$, we obtain that the expectation of~$X$ is the solution to the (deterministic) PDE
\begin{equation*}
	\partial_t u = \Delta_{\mathbb{S}^2} u
\end{equation*}
with initial condition $u(0) = u_0$.

This PDE is solved by the variations of constants formula
\begin{equation*}
	\E[X(t)]
	= u(t)
	= \sum_{\ell =  0}^{\infty} \sum_{m = - \ell}^{\ell} e^{- \ell (\ell +1)t} u_{\ell,m}^0 Y_{\ell,m}
	= \sum_{\ell =  0}^{\infty} \sum_{m = - \ell}^{\ell} e^{- \ell (\ell +1)t} \E[X_{\ell,m}^0] Y_{\ell,m},
\end{equation*}
where $u_{\ell,m}^0 = \langle u_0, Y_{\ell,m}\rangle_{L^2(\mathbb{S}^2;\C)}$. 

Another interesting quantity of the solution is the second moment $\E[\|X(t)\|^2_{L^2(\IS^2)}]$. We observe first that
\begin{equation*}
	\E[\|X(t)\|^2_{L^2(\IS^2)}]
	= \E\Bigl[\Bigl\|
	\sum_{\ell =  0}^{\infty} \sum_{m = - \ell}^{\ell} 
	\Bigl(e^{- \ell (\ell +1)t} X_{\ell,m}^0 + \int_0^t e^{- \ell (\ell +1)(t-s)} \, \dd a_{\ell,m}(s)\Bigr) 
	Y_{\ell,m}\Bigr\|_{L^2(\IS^2)}^2\Bigr],
\end{equation*}
where the stochastic processes $a_{\ell,m}$ are given in~\eqref{eqn: KarhLo}. Due to the independence of the $Q$-Wiener process and the initial condition and the mean zero property of the It\^o integral, the two terms separate. While the first term satisfies
\begin{equation*}
	\E\Bigl[\Bigl\|
	\sum_{\ell =  0}^{\infty} \sum_{m = - \ell}^{\ell} e^{- \ell (\ell +1)t} X_{\ell,m}^0 Y_{\ell,m}\Bigr\|_{L^2(\IS^2)}^2\Bigr]
	= \sum_{\ell =  0}^{\infty} \sum_{m = - \ell}^{\ell} e^{- 2 \ell (\ell +1)t} \E[|X_{\ell,m}^0|^2] \| Y_{\ell,m}\|^2_{L^2(\mathbb{S}^2;\C)},
\end{equation*}
it remains to have a closer look at the stochastic convolution next. By the It\^o isometry and the scaling of the spherical harmonic functions, we obtain
\begin{align*}
	& \E\left[\Bigl\|
	\sum_{\ell =  0}^{\infty} \sum_{m = - \ell}^{\ell} 
	\int_0^t e^{- \ell (\ell +1)(t-s)} \, \dd a_{\ell,m}(s)
	Y_{\ell,m}\Bigr\|_{L^2(\IS^2)}^2\right]\\
	& = \E\biggl[\Bigl\| \sum_{\ell =  0}^{\infty} \Bigl( \sqrt{A_{\ell}} \int_0^t e^{- \ell (\ell +1)(t-s)} \, \dd \beta_{\ell,0}^1(s) Y_{\ell,0} \\
	& \hspace*{2em} + \sqrt{2 A_{\ell}} \sum_{m=1}^{\ell} \bigl(\int_0^t e^{- \ell (\ell +1)(t-s)} \, \dd \beta_{\ell,m}^1(s) \mathrm{Re} Y_{\ell,m} + \int_0^t e^{- \ell (\ell +1)(t-s)} \, \dd \beta_{\ell,m}^2(s) \mathrm{Im} Y_{\ell,m}\bigr)\Bigr) 	\Bigr\|_{L^2(\IS^2)}^2\biggr]\\
	& = \sum_{\ell =  0}^{\infty} \biggl( 
	A_{\ell} \int_0^t e^{- 2 \ell (\ell +1)(t-s)} \, \dd s \,
	\Bigl( \|Y_{\ell,0}\|_{L^2(\IS^2)}^2 + 2 \sum_{m=1}^{\ell}  (\|\Re Y_{\ell,m}\|_{L^2(\IS^2)}^2 + \|\Im Y_{\ell,m}\|_{L^2(\IS^2)}^2 ) \Bigr) \biggr)\\
	& = \sum_{\ell =  0}^{\infty}A_\ell (2\ell(\ell+1))^{-1} (1 - e^{- 2 \ell (\ell+1)t}) (1+ 2\ell).
\end{align*}
In conclusion the second moment of~$X(t)$ is given by
\begin{align*}%\label{eq:second_moment}
	\begin{split}
		& \E[\|X(t)\|^2_{L^2(\IS^2)}] \\
		& \quad = \sum_{\ell =  0}^{\infty} \Bigl( \sum_{m = - \ell}^{\ell} e^{- 2 \ell (\ell +1)t} \E[|X_{\ell,m}^0|^2] \| Y_{\ell,m}\|^2_{L^2(\mathbb{S}^2)} \Bigr)
		 +  A_\ell (1+ 2\ell) (2\ell(\ell+1))^{-1} (1 - e^{- 2 \ell (\ell+1)t}).
	\end{split}
\end{align*}

\section{Regularity of exponential functions and their approximation}\label{app:reg_exp_approx}

In this section we collect the proofs on the regularity of exponential functions and their approximation with a forward and backward Euler method from the propositions in Section~\ref{sec-Euler}.

\begin{proof}[Proof of Proposition~\ref{prop:exp_fEM}]
	Let us start to prove the first property~\ref{prop:exp_fEM_1}.
	By partial integration we obtain that
	\begin{equation*}
		| e^{-\ell(\ell+1)h} - (1-\ell(\ell+1)h) |
		= \left| \int_0^h \int_0^s (\ell(\ell+1))^2 e^{-\ell(\ell+1)r} \, \dd r \, \dd s \right|.
	\end{equation*}
	Since $x^\eta e^{-x} \le \tilde{C}_\eta$, we can bound the expression inside the integral by
	\begin{equation*}
		(\ell(\ell+1))^2 e^{-\ell(\ell+1)r}
		\le \tilde{C}_\mu (\ell(\ell+1))^{1+ \mu} r^{\mu-1},
	\end{equation*}
	which leads for any $\mu \in (0,1]$ to
	\begin{align*}
		\left| \int_0^h \int_0^s (\ell(\ell+1))^2 e^{-\ell(\ell+1)r} \, \dd r \, \dd s \right|
		& \le  \tilde{C}_\mu (\ell(\ell+1))^{1+ \mu} \mu^{-1} \int_0^h s^{\mu} \, \dd s \\
		& = \tilde{C}_\mu (\ell(\ell+1))^{1+ \mu} \mu^{-1} (1+ \mu)^{-1} h^{1+ \mu}
		= C_\mu (\ell(\ell+1))^{1+ \mu} h^{1+ \mu}.
	\end{align*}
	
	We continue with the proof of~\ref{prop:exp_fEM_3} and use $a^n - b^n = (a-b) \sum_{j = 0}^{n-1} a^{n-1-j}b^j$ to obtain that
	\begin{align*}
		& | e^{- \ell ( \ell +1) h\cdot k} - (1-\ell (\ell +1)h)^k |\\
		& \quad =  |e^{- \ell ( \ell +1) h} - (1-\ell (\ell +1)h)| 
		\cdot \Bigl| \sum_{j = 0}^{k-1} e^{- \ell ( \ell +1) h\cdot j} (1-\ell (\ell +1)h)^{k-1-j} \Bigr|.
	\end{align*}
	The first term is bounded by~\ref{prop:exp_fEM_1} and for the second, we observe that the Taylor expansion with remainder satisfies
	\begin{equation*}
		e^{-\ell(\ell+1)h} = 1 - \ell(\ell+1)h + \int_0^{\ell(\ell+1)h} (\ell(\ell+1)h - s) e^{-s} \, \dd s.
	\end{equation*}
	Since the integral is positive, we obtain
	\begin{equation*}
		1 - \ell(\ell+1)h \le e^{-\ell(\ell+1)h},
	\end{equation*}
	which yields
	\begin{align*}
		\Bigl| \sum_{j = 0}^{k-1} e^{- \ell ( \ell +1) h\cdot j} (1-\ell (\ell +1)h)^{k-1-j} \Bigr|
		\le k \, e^{- \ell ( \ell +1) h\cdot (k-1)}
	\end{align*}
	and implies the first inequality of the claim. The second follows by
	\begin{equation*}
		k \, e^{- \ell ( \ell +1) h\cdot (k-1)}
			\le e^{\ell(\ell+1)h} \, \tilde{C}_1 (\ell ( \ell +1) h)^{-1}
			\le e^1 \tilde{C}_1 (\ell ( \ell +1) h)^{-1},
	\end{equation*}
	 applying again that $x^\eta e^{-x} \le \tilde{C}_\eta$ and that $\ell(\ell+1)h \le 1$.
\end{proof}

\begin{proof}[Proof of Proposition~\ref{prop:exp_bEM}]	
	Similarly to the proof of Proposition~\ref{prop:exp_fEM}, we observe first by partial integration that
	\begin{equation*}
		| e^{-\ell(\ell+1)h} - (1+\ell(\ell+1)h)^{-1} |
		= \left| \frac{-(\ell(\ell+1))^2}{1 + \ell(\ell+1)h} \int_0^h \int_s^h e^{-\ell(\ell+1)r} \, \dd r \, \dd s \right|.
	\end{equation*}
	Using again that $x^\eta e^{-x} \le \tilde{C}_\eta$ to bound $(\ell(\ell+1))^{1-\mu} e^{-\ell(\ell+1)r} \le \tilde{C}_{1-\mu} r^{\mu-1}$, we compute the integrals to obtain
	\begin{equation*}
		\int_0^h \int_s^h r^{\mu-1} \, \dd r \, \dd s
		= (1+\mu)^{-1} h^{1+\mu}.
	\end{equation*}
	Putting all together yields
	\begin{align*}
		\left| \frac{-(\ell(\ell+1))^2}{1 + \ell(\ell+1)h} \int_0^h \int_s^h e^{-\ell(\ell+1)r} \, \dd r \, \dd s \right|
		& \le \tilde{C}_{1-\mu} \frac{(\ell(\ell+1))^{1+\mu}}{1 + \ell(\ell+1)h} (1+\mu)^{-1} h^{1+\mu}\\
		& = C_\mu (\ell(\ell+1))^{1+\mu} h^{1+\mu}
	\end{align*}
	for all $\mu \in (-1,1]$, which concludes the proof of~\ref{prop:exp_bEM_1}.
	
	Using again the same approach as in Proposition~\ref{prop:exp_fEM} with $a^n - b^n = (a-b) \sum_{j = 0}^{n-1} a^{n-1-j}b^j$ and bounding
	\begin{align*}
		\left| \sum_{j = 0}^{k-1} e^{- \ell ( \ell +1) h\cdot j} (1+\ell (\ell +1)h)^{-(k-1-j)} \right|
		\le \frac{e^{C_c}}{1+C_c} k \, e^{- \ell ( \ell +1) h\cdot (k-1)}
		%C_\gg (\ell(\ell+1))^{1-\gg} h^{1-\gg}
	\end{align*}
	in a similar way yields both inequalities in~\ref{prop:exp_bEM_3}. The only difference is that we apply $\ell(\ell+1)h \le C_c$ to obtain the bound
	\begin{equation*}
		(1+\ell(\ell+1)h)^{-1}
			= (1+\ell(\ell+1)h)^{-1} e^{\ell(\ell+1)h} e^{-\ell(\ell+1)h}
			\le e^{C_c} e^{-\ell(\ell+1)h}. \qedhere
	\end{equation*}
\end{proof}

\begin{proof}[Proof of Proposition~\ref{prop:reg_exp}]
To prove~\ref{prop:reg_exp_2}, we observe first that 
\begin{align*}
	\int_{t_{j-1}}^{t_j} (e^{ - \ell (\ell+1)(t_k-s)} - e^{ - \ell (\ell+1)(t_k-t_{j-1})})^2  \, \dd s
		& = \int_{t_{j-1}}^{t_j} e^{ - 2\ell (\ell+1)(t_k-s)} (1 - e^{-\ell(\ell+1)(s-t_{j-1})})^2 \, \dd s \\
		& \le h e^{ - 2\ell (\ell+1)(t_k-t_j)} (1 - e^{-\ell(\ell+1)h})^2.
\end{align*}
Therefore we can bound
\begin{align*}
	\Bigl|\sum_{j = 1}^k \int_{t_{j-1}}^{t_j} (e^{ - \ell (\ell+1)(t_k-s)} - e^{ - \ell (\ell+1)(t_k-t_{j-1})})^2  \, \dd s\Bigr|
	\le h (1 - e^{-\ell(\ell+1)h})^2 \sum_{j = 1}^k e^{ - 2\ell (\ell+1)(t_k-t_j)}.
\end{align*}
Since $e^{ - 2\ell (\ell+1)h} < 1$ and $(1-x)^{-1} = \sum_{j=0}^\infty x^j$ for $|x|<1$, the sum satisfies
\begin{align*}
	\sum_{j = 1}^k e^{ - 2\ell (\ell+1)(t_k-t_j)}
		& = \sum_{j = 0}^{k-1} e^{ - 2\ell (\ell+1)h \cdot j}
		\le (1 - e^{ - 2\ell (\ell+1)h})^{-1}
		= (1 - e^{ - \ell (\ell+1)h})^{-1} (1 + e^{ - \ell (\ell+1)h})^{-1}\\
		& \le (1 - e^{ - \ell (\ell+1)h})^{-1}.
\end{align*}
On the one hand side, for $\mu \in (1/2,1]$
\begin{align*}
	1 - e^{-\ell(\ell+1)h}
		& = (\ell(\ell+1))^{2\mu-1} \int_0^h (\ell(\ell+1))^{2-2\mu} e^{-\ell(\ell+1)r} \, \dd r\\
		& \le \tilde{C}_{2-2\mu}(\ell(\ell+1))^{2\mu-1} \Bigl| \int_0^h r^{2\mu -2} \, \dd r \Bigr|
		= \tilde{C}_{2-2\mu}(\ell(\ell+1))^{2\mu-1} (2\mu-1)^{-1} h^{2\mu-1}.
\end{align*}
For $\mu = 1/2$ the expression is bounded and for $\mu \in (0,1/2)$ on the other hand side
\begin{equation*}
	1 - e^{-\ell(\ell+1)h}
		\le 1
		= (\ell(\ell+1)h)^{1-2\mu}(\ell(\ell+1)h)^{2\mu - 1}
		\le C_c (\ell(\ell+1)h)^{2\mu - 1},
\end{equation*}
since $\ell(\ell+1)h \le C_c$. Therefore for $\mu \in (0,1]$, the expression satisfies
\begin{equation*}
	1 - e^{-\ell(\ell+1)h}
		\le C (\ell(\ell+1)h)^{2\mu - 1},
\end{equation*}
and putting all terms together yields the claim.
Similarly one proves~\ref{prop:reg_exp_3}.

We continue with the proof of~\ref{prop:reg_exp_4}. With the same steps as in the proof of~\ref{prop:reg_exp_2}, we arrive at
\begin{align*}
	& \Bigl|\sum_{j = 1}^k \int_{t_{j-1}}^{t_j} e^{ - 2\ell (\ell+1)(t_k-s)} - e^{ - 2 \ell (\ell+1)(t_k-t_{j-1})}  \, \dd s\Bigr| \\
	& \quad \le \sum_{j = 1}^k h e^{ - 2\ell (\ell+1)(t_k-t_j)} (1 - e^{-2\ell(\ell+1)h})
	\le h (1 - e^{-2\ell(\ell+1)h})^{-1} (1 - e^{-2\ell(\ell+1)h})\\
	& \quad = h (\ell(\ell+1)h)^{\mu -1} (\ell(\ell+1)h)^{1- \mu}
	\le C_c (\ell(\ell+1))^{\mu -1} h^\mu,
\end{align*}
which shows the claim. The proof of~\ref{prop:reg_exp_5} follows in the same way.
\end{proof}

\bibliographystyle{abbrv}
\bibliography{dasbib}

\end{document}